    \pgfmathsetmacro\weight{1/2}%
    \pgfmathsetmacro\third{1/3}%
    \pgfmathsetmacro\twothirds{2/3}%
    \tikzset{degil/.style={%
              decoration={markings,
              mark= at position 0.5 with{%
                    \node[transform shape] (tempnode) {$/$};
                }%
            },%
            postaction={decorate}
    }
    }
\newcommand{\bbB}{\mathbb{B}}
\newcommand{\bbC}{\mathbb{C}}
\newcommand{\bbN}{\mathbb{N}}
\newcommand{\calD}{\mathcal{D}}
\newcommand{\calL}{\mathcal{L}}
\newcommand{\calM}{\mathcal{M}}
\newcommand{\calX}{\mathcal{X}}
\newcommand{\dotbox}{\tikz[baseline=-0.5ex]\draw[black,very thick, fill=black] (0,0) rectangle (0.3em,0.3em) node[black,pos=.5,circle,fill=white,inner sep=0.3pt]{\tiny$\cdot$};}
\newcommand{\argument}{\mathord{\,\cdot\,}} % argument dot for functions (with correct spacing)
\newcommand{\dx}{\;\mathrm{d}} % differential (for use at the end of integrals)
\newcommand{\norm}[1]{\left\lVert #1 \right\rVert} % norm
\newcommand{\modulus}[1]{\left\lvert #1 \right\rvert} % modulus
\newcommand{\duality}[2]{\left\langle#1\, ,\, #2\right\rangle} % duality / scalar product
\DeclareMathOperator{\dom}{dom} % domain (of an unbounded operator)
\DeclareMathOperator{\Ima}{Rg} % image / range of a mapping
\newcommand\restrict[1]{\raisebox{-.5ex}{$|$}_{#1}} %restriction of a map
\renewcommand{\sun}{\odot} % sun-dual
\newcommand{\moon}{\leftmoon} % moon-dual
\newcommand{\Reg}{\operatorname{Reg}} % regulated
\DeclareMathOperator{\SV}{SV} % semi-bdd variation
\DeclareMathOperator{\Var}{var} % variation
\newcommand{\resSet}{\rho}
\newcommand{\Res}{R} % resolvent
\definecolor{TUBlue}{cmyk}{1,0.70,0.10,0.50} %100,70,10,50 
\theoremstyle{plain}
\theoremstyle{definition}
\newtheorem{definition}{Definition}[section]
\newtheorem{remark}[definition]{Remark}
\newtheorem{remarks}[definition]{Remarks}
\newtheorem*{remark*}{Remark}
\newtheorem*{remarks*}{Remarks}
\newtheorem{proposition}[definition]{Proposition}
\newtheorem{theorem}[definition]{Theorem}
\numberwithin{equation}{section} % enumerate formulas within sections
\title{Admissible operators for sun-dual semigroups}
\author[S.~Arora]{Sahiba Arora\, \orcidlink{0000-0003-1973-8358}}
\address{Sahiba Arora, Department of Applied Mathematics, University of Twente, 217, 7500 AE, Enschede, The Netherlands.
Current address: Leibniz Universität Hannover, Institut für Analysis, Welfengarten 1, 30167 Hannover, Germany}
\email{sahiba.arora@math.uni-hannover.de}
\author[F.L.~Schwenninger]{Felix L.~Schwenninger\,\orcidlink{0000-0002-2030-6504}}
\address{Felix L.~Schwenninger, Department of Applied Mathematics, University of Twente, 217, 7500 AE, Enschede, The Netherlands}
\email{f.l.schwenninger@utwente.nl}
\date{\today}
\begin{document}

\begin{abstract}
    We extend classical duality results by Weiss on admissible operators to settings where the dual semigroup lacks strong continuity. 
    This is possible using the sun-dual framework, which is not immediate from the duality of the input and output maps. This extension enables the testing of admissibility for a broader range of examples, in particular for state space of continuous functions or $L^1$.
\end{abstract}
\keywords{admissible control operator; admissible observation operator; infinite-dimensional linear systems; sun-dual semigroups; sun-dual; dual semigroup }
\subjclass[2020]{93C25, 93C05, 47D06}

\maketitle

\section{Introduction}

Our starting point is linear time-invariant systems of the form
\begin{equation*}
    \label{eq:lti-system}
    \Sigma(A,B,C)\quad\left\{
        \begin{aligned}
            \dot{x}(t) &= Ax(t)+Bu(t),\quad &t\ge 0\\
            y(t)       &= Cx(t),       \quad &t\ge 0\\
            x(0)       &= x_0;
        \end{aligned}
    \right.
\end{equation*}
where $x(t)$ denotes the state of the system at time $t$, $u(t)$ denotes the input, and $y(t)$ denotes the output. The state, input, and output spaces are denoted by $X, U$, and $Y$ respectively and are assumed to be Banach spaces. Moreover, $A$ is assumed to generate a $C_0$-semigroup $(T(t))_{t\ge 0}$ on $X$.
For systems described by (time-dependent) PDEs and with controls and observations (measurements) acting on the spatial boundary, the operators $B$ and $C$ become ``unbounded'' with respect to the state space $X$, in the sense that  only $B\in \calL(U,X_{-1})$ and $C\in \calL(X_1,Y)$; where $X_{-1}$ denotes the extrapolation space associated to $(T(t))_{t\ge 0}$ and $X_1$ denotes the interpolation space $\dom( A)$. This approach is explained in \cite{EmirsjlowTownley2000, Salamon1984, Salamon1987}; see also \cite{TucsnakWeiss2009,Schwenninger2020,Staffans2005}.

For each $x_0\in X$, the system $\Sigma(A,B,0)$ has a mild solution in $X_{-1}$ given by 
\[
    x(t) = T(t)x_0 + \int_0^t T_{-1}(t-s)Bu(s)\dx s;
\]
where $(T_{-1}(t))_{t\ge 0}$ denotes the extrapolated semigroup on $X_{-1}$. In this case, it makes sense to ask whether the solution lies in $X$, which gives rise to the notion of the admissibility of control operators. Let $\mathrm Z$ be a placeholder for $\mathrm C$ (to denote continuous functions) or $L^p$ with $p\in [1,\infty]$. We say that $B$ is a \emph{$\mathrm Z$-admissible control operator} if  for some (equivalently, all) $\tau>0$, the \emph{input map} -- defined as
\begin{equation}
    \label{eq:input-operator}
        \Phi_{\tau}:\mathrm{Z}([0,\tau],U)\to       X_{-1},\qquad
                                        u \mapsto   \int_0^{\tau}T_{-1}(\tau-s)Bu(s)\dx s
\end{equation}
satisfies $\Ima \Phi_{\tau}\subseteq X$. Correspondingly, the solution of $\Sigma(A,0,C)$ is given by
\[
    y(t) = CT(t)x_0,\qquad (x_0\in X_1).
\]
We say that $C$ is a \emph{$\mathrm Z$-admissible observation operator} if the \emph{output map} 
\begin{equation}
    \label{eq:output-operator}
        \Psi_{\tau}:X_1 \to     \mathrm{Z}([0,\tau],Y),\qquad
                    x   \mapsto   CT(\argument)x
\end{equation}
has a bounded extension to $X$ for some (equivalently, all) $\tau>0$.

 The concept of admissible operators is fundamental to the investigation of infinite-dimensional systems, and their significance is especially pronounced in the realm of well-posed systems, where they facilitate the establishment of stability, controllability, and observability of such systems \cite{JacobPartington2004, Staffans2005, TucsnakWeiss2014}. The theory of admissibility for the case $\mathrm Z=L^2$ and $X$ being a Hilbert space is classical \cite{TucsnakWeiss2009, JacobPartington2001, Staffans2005}. The case $p\in(1,\infty)$
 has also garnered significant attention \cite{HaakLeMerdy2005, Haak2004, JacobPartingtonPott2014, Weiss1989a, Weiss1989b}. In this context, the Weiss duality result \cite[Theorem~6.9]{Weiss1989a} plays an important role. In particular, under the assumption that the dual semigroup $(T'(t))_{t\ge 0}$ is strongly continuous and $p,q\in (1,\infty)$ are H\"older conjugates, it says that $B$ is a $L^p$-admissible control operator if and only if $B'$ is a $L^q$-admissible observation operator and analogously, for $C$'s. More recently, there is growing interest in ``limit-case'' admissibility \cite{JacobNabiullinPartingtonSchwenninger2018, JacobSchwenningerZwart2019, MironchenkoPrieur2020, AroraGlueckPaunonenSchwenninger2024,PreusslerSchwenninger2024}, referring in particular to $L^{\infty}$-admissible control operators because of their importance in the study of input-to-state stability (ISS); see \cite{JacobNabiullinPartingtonSchwenninger2018,MironchenkoPrieur2020}.

The Weiss duality result enables the translation of various (negative) results between control and observation operators, especially where the state space is reflexive. In practice, however, there are multiple situations where the dual semigroup $(T'(t))_{t\ge 0}$ is not strongly continuous on $X'$ or where $X$ has no pre-dual, for instance, where $X$ is an $L^1$-space -- which is often the case when studying $L^1$-admissibility for observation operators. As a result, various facts known for control operators cannot be translated to the observation operators and vice versa. An important example here is \cite[Theorem~4.8]{Weiss1989b} which says that if $X$ is reflexive, then $B$ is a $L^1$-admissible control operator if and only if $B\in \calL(U, X)$. The reflexivity of $X$ cannot be dropped as is shown in \cite[Negative result~5.4]{Weiss1989b} by taking a periodic left shift semigroup on $L^1\big([0,2\pi]\big)$. Since $L^1\big([0,2\pi]\big)$ does not have a predual, the same example cannot be used to show the existence of an unbounded $L^\infty$-admissible observation operator.  Similarly, the fact that all $L^1$-admissible control operators are those that map into the Favard space associated with the extrapolated semigroup \cite[Corollary~17]{MaraghBounitFadiliHammouri2014} cannot be dualized if the dual semigroup lacks strong continuity.

In operator semigroups, the classical approach to circumvent the above issue of strong continuity is restricting the dual semigroup to the closed subspace on which the dual semigroup is strongly continuous; the {\emph{sun-dual space}}. Remarkably, this (still) allows for a rich theory, mostly developed in the 1980s, see \cite{ClementDiekmannGyllenbergHeijmansThieme1987,ClementDiekmannGyllenbergHeijmansThieme1988,ClementDiekmannGyllenbergHeijmansThieme1989a,ClementDiekmannGyllenbergHeijmansThieme1989b,DiekmannGyllenbergThieme1991}, as well as the monograph by van Neerven \cite{vanNeerven1992}. These works take motivation ranging from classical age population models over delay equations \cite{diekmanvgils1991,diekmannvangils1995} to models arising in neuroscience \cite{Spe2020}, where $L^{1}$- and $\sup$-norms are naturally appearing. However, in the context of admissible operators and more generally infinite-dimensional systems theory, sun-duality has hardly been employed; see \cite{controlsundual2} for controllability results and \cite{controlsundual1} for some optimal control problems on non-reflexive spaces. This is the gap we would like to close in the present paper. Our original motivation for this lies in characterising $\mathrm{Z}$-admissible operators, particularly for semigroups with a non-trivial sun-dual. Let us showcase why this is of interest:~it is still an open question whether $L^{\infty}$-admissible control operators are always \emph{zero-class}, i.e., whether $\lim_{\tau\to0^{+}}\|\Phi_{\tau}\|_{\mathcal{L}(L^{\infty}([0,\tau],U),X)}=0$, with $\Phi_\tau$ defined in~\eqref{eq:input-operator}; see for instance, \cite[Section~6]{JacobNabiullinPartingtonSchwenninger2018}. On the other hand, the formally dual question can be answered in the negative \cite[Example~26]{JacobSchwenningerZwart2019}:~there exists $L^{1}$-admissible observation operators such that $\lim_{\tau\to0^{+}}\|\Psi_{\tau}\|_{\mathcal{L}(X,L^{1}([0,\tau],Y))}\neq0$. It is not possible to link these two settings by the usual duality as the involved function spaces are $L^{1}$-spaces and $L^{\infty}$-spaces. Moreover, by Lotz's result \cite{lotz1985},  any strongly continuous semigroup on $L^{\infty}\left([0,1]\right)$ -- the dual of the state space of the mentioned counterexample -- has a bounded generator, which readily implies zero-class admissibility. In \cite{JacobSchwenningerWintermayr2022}, it was indeed shown that $L^{\infty}$-admissibility of $B=A_{-1}$, the extension of $A$ to an operator from $X$ to $X_{-1}$, implies that $A$ is bounded, resting on deep results from the geometry of Banach spaces and a connection to maximal regularity for parabolic equations. Our results show that the sun-duality is the right framework to dualise these situations; in particular we show in Theorem \ref{thm:sun-dual-continuous} that $\mathrm{C}$-admissibility of control operators $B$ is the proper dual concept for $B'$ being an $L^{1}$-admissible observation operator with respect to the sun-dual semigroup. 

Zero-class admissible observation operators first appeared in \cite{XuLiuYung2008} in the context of observability. We refer to \cite{HaakOuhabaz2012} for details and  more general results.
%In particular, it was shown in \cite[Corollary~3.3]{XuLiuYung2008} that for systems on Hilbert spaces with self-adjoint semigroup generator, a zero-class $L^2$-admissible observation operator cannot be exactly observable.
%
   % \textcolor{red}{Need to diplomatically cite \cite{HaakOuhabaz2012}... }
%
On the other hand, for $p<\infty$, zero-class $L^p$-admissible control operators ensures continuity of the mild solution with values in the state space \cite[Proposition~2.3]{Weiss1989b}. 

We note the connection of admissible operators to perturbation theory for operator semigroups, given by the classical Miyadera-Voigt and Desch-Schappacher theorems, see, for example, \cite[Chapter~3]{EngelNagel2000}. In the system-theoretic context described above, these can in essence be phrased as follows:~if  perturbations $C\in\mathcal{L}(X_{1},X)$ or $B\in \mathcal{L}(X,X_{-1})$ are zero-class $L^{1}$- or $\mathrm{C}$-admissible, respectively, then the perturbed semigroup $A+C$ or the part of $A_{-1}+B$ in $X$, respectively, generate $C_{0}$-semigroups.  It is worth mentioning that the sun-dual theory \cite{ClementDiekmannGyllenbergHeijmansThieme1987} originated from perturbation results around the same time.  More precisely, in \cite{ClementDiekmannGyllenbergHeijmansThieme1987}, see also \cite[Theorems~3.2.6 and~4.3.5]{vanNeerven1992}, it was shown that if $B\in\mathcal{L}(X,X^{\sun\times})$, then the part of $A_{-1}+B$ in $X$ generates a $C_{0}$-semigroup, where the space $X^{\sun\times}$ can be isomorphically identified with the Favard space of the extrapolated semigroup on $X_{-1}$. We skip the definitions of those spaces but point out that $\mathcal{L}(X,X^{\sun\times})$ is isomorphic to the set of $L^1$-admissible control operators from $X$ to $X_{-1}$, in  \cite[Corollary~17]{MaraghBounitFadiliHammouri2014}.

Our duality results are given in Sections~\ref{sec:control} and~\ref{sec:observation}, generalising the duality result by Weiss from \cite[Theorem 6.9]{Weiss1989a}, dropping any condition of the form $X^{\sun}=X'$. For convenience, we  summarise the scenario in Figures~\ref{figure:control} and~\ref{figure:observation}. The penultimate Section~\ref{sec:example} is devoted to a prototypical example for which the limit-case admissibility is characterised.
The article concludes with some remarks in Section~\ref{sec:conclusion}.

\begin{figure}%
    \begin{tikzpicture}[>=implies,thick, minimum height=0.5cm, minimum width=2cm]
        \node (C) at (0,0) {$B$ is $\mathrm{C}$-admissible for $(T(t))_{t\ge 0}$};
    
        \node (1) at (7,0) {$B'$ is $L^{1}$-admissible for $(T^{\sun}(t))_{t\ge 0}$};
    
        \node (p) at (0,-2) {$B$ is $L^{p}$-admissible for $(T(t))_{t\ge 0}$};
    
        \node (q) at (7,-2) {$B'$ is $L^{q}$-admissible for $(T^{\sun}(t))_{t\ge 0}$};
    
        \draw[thick,double equal sign distance,<->] (4.3,0) to (2.5,0);
    
        \draw[thick,double equal sign distance,->] (-0.5,-1.7) to (-0.5,-0.25);
    
        \draw[thick,double equal sign distance,->] (6.5,-1.7) to (6.5,-0.25);
    
        \draw[thick,double equal sign distance,<-] (4.3,-1.9) to (2.5,-1.9);
        
        \draw[purple,thick,double equal sign distance,->] (4.3,-2.2) to (2.5,-2.2);
    
        \node[purple] (zero) at (3.4,-2.5) {\small $p<\infty$};
    \end{tikzpicture}
    \caption{Duality between control operators $B\in \calL(U, X_{-1})$ and observation operators $B'\in \calL\left((X^{\sun})_{1},U'\right)$}%
    \label{figure:control}
\end{figure}

\begin{figure}%
 \begin{tikzpicture}[>=implies,thick, minimum height=0.5cm, minimum width=2cm]
        \node (C) at (0,0) {$C$ is $L^1$-admissible for $(T(t))_{t\ge 0}$};
    
        \node (1) at (7,0) {$C'$ is $\mathrm{C}$-admissible for $(T^{\sun}(t))_{t\ge 0}$};
    
        \node (p) at (0,-2) {$C$ is $L^{p}$-admissible for $(T(t))_{t\ge 0}$};
    
        \node (q) at (7,-2) {$C'$ is $L^{q}$-admissible for $(T^{\sun}(t))_{t\ge 0}$};
    
         \draw[purple,thick,double equal sign distance,<-] (4.3,0.1) to (2.5,0.1);
        
        \draw[thick,double equal sign distance,->] (4.3,-0.2) to (2.5,-0.2);

        \draw[thick,double equal sign distance,->] (-0.5,-1.7) to (-0.5,-0.25);

        \draw[purple,thick,double equal sign distance,<-] (4.3,-1.9) to (2.5,-1.9);

        \draw[thick,double equal sign distance,->] (4.3,-2.2) to (2.5,-2.2);

        \draw[thick,double equal sign distance,->] (6.5,-1.7) to (6.5,-0.25);
    
        \node[purple] (zero) at (3.4,0.4) {\small zero-class};

        \node[purple] (p) at (3.3,-1.6) {\small $p>1$};
    \end{tikzpicture}
    \caption{Duality for observation operators $C\in \calL(X_1, Y)$ with $\Ima C'  \subseteq \left(X^{\sun}\right)_{-1}$ and control operators $B'\in \calL\left(Y',\left(X^{\sun}\right)_{-1}\right)$.}%
    \label{figure:observation}
\end{figure}

\subsection*{Preliminaries}

Let $(T(t))_{t\ge 0}$ be a $C_0$-semigroup on a Banach space $X$.
We use the notation $X^{\sun}$, to denote the subspace of $X'$ where the dual semigroup $(T(t)')_{t\ge 0}$ is strongly continuous. The restricted $C_0$-semigroup is as usual denoted by $(T^{\sun} (t))_{t\ge 0}$.
For the theory of sun-dual semigroups, we refer the reader to \cite{vanNeerven1992}. 

Let $U$ and $Y$ be Banach spaces and let $\mathrm Z$ be a placeholder for $\mathrm C$ or $L^p$. For $B\in \calL(U,X_{-1})$, we say that $B$ is a \emph{zero-class $\mathrm Z$-admissible} control operator if the input map in~\eqref{eq:input-operator} satisfies $\lim_{\tau\downarrow 0}\norm{\Phi_\tau}_{\calL(\mathrm Z([0,\tau],U),X)}=0$. Likewise,  $C\in \calL(X_1,Y)$ is called a \emph{zero-class $\mathrm Z$-admissible} observation operator if the output map in~\eqref{eq:output-operator} fulfils $\lim_{\tau\downarrow 0}\norm{\Psi_\tau}_{\calL(X,\mathrm Z([0,\tau],Y))}=0$. 
For $p\in [1,\infty]$, we write $\bbC_p(X, Y, (T(t))_{t\ge 0})$ for the subspace of $\calL(X_1,Y)$ of all $L^p$-admissible observation operators and set
\[
    \norm{C}_{\bbC_p(X, Y, \tau)}:= \norm{\Psi_\tau}_{\calL(X,L^p([0,\tau],Y))}.
\]
Similarly, $\bbB_p(U, X, (T(t))_{t\ge 0}))$ denotes the subspace of $\calL(U, X_{-1})$ of all $L^p$-admissible control operators with
\[
    \norm{B}_{\bbB_p(U, X, \tau)}:=\norm{\Phi_\tau}_{\calL( L^p([0,\tau],U),X)}
\]
For convenience, the notation $\bbB_{\mathrm C}(U, X, (T(t))_{t\ge 0}))$ is sometimes used to denote the $\mathrm C$-admissible control operators with
\[
    \norm{B}_{\bbB_{\mathrm C}(U, X, \tau)}:=\norm{\Phi_\tau}_{\calL(\mathrm C([0,\tau],U),X)}
\]
denoting the corresponding norm.

\section{Characterisation of $\mathrm C$-admissibility of control operators}

Let $U$ be a Banach space and denote by $\mathrm{T}([0,\tau], U)$, the space of all $U$-valued step functions on $[0,\tau]$, i.e., piecewise constant functions with finitely many pieces. Equipped with the supremum norm, $\mathrm{T}([0,\tau], U)$ becomes a normed space whose completion is the space of \emph{regulated} functions $\Reg([0,\tau], U)$. One can therefore define $\Reg$-admissibility by replacing $\mathrm Z$ by $\Reg$ in~\eqref{eq:input-operator}. Since every continuous function is regulated, it is immediate that $\Reg$-admissibility implies $\mathrm C$-admissibility. Actually, the two notions are even equivalent \cite[Proposition~4.2]{AroraGlueckPaunonenSchwenninger2024}. The following result, which is an extension of \cite[Theorem~10.2.2]{Staffans2005} -- adapting an argument of Travis \cite[Lemma~3.1 and Proposition~3.1]{Travis1981}, see also \cite[Proposition~2.2]{JacobSchwenningerWintermayr2022} --  characterizes the class of all $\mathrm C$-admissible control operators.

Let $X, \calX$, and $U$ be Banach spaces. Recall that the \emph{semivariation} of a function $F:[0,\tau]\to \calL(U,X)$ is defined as
\[
    \SV_0^{\tau}(F):= \sup_{\substack {\norm{u_i}_U \le 1\\ 0=t_1<t_2<\ldots<t_n=\tau \\ n\in \bbN}}\norm{\sum_{i=1}^{n} \big(F(t_i)-F(t_{i-1})\big)u_i  }_X 
\]
and $F$ is said to be of \emph{bounded semivariation} on $[0,\tau]$ if $\SV_0^{\tau}(F)<\infty$. Moreover, the \emph{variation} of a function $f:[0,\tau]\to \calX$ is given by
\[
    \Var_0^{\tau}(f):=\sup_{\substack { 0=t_1<t_2<\ldots<t_n=\tau \\ n\in \bbN}}  \sum_{i=1}^{n} \norm{f(t_i)-f(t_{i-1})  }_{\calX} 
\]
and $f$ is said to have \emph{bounded variation} on $[0,\tau]$ if $\Var_0^{\tau}(f)<\infty$. 
Observe that $\SV_0^{\tau}(F)\le \Var_0^{\tau}(F)$.
% Given a function $F:[0,\tau]\to \calL(U,X)$, note that for each $u\in U$ with $\norm{u}_U\le 1$, the function $f_u:[0,\tau]\to X$ given by $f_u:= F(\argument)u$ satisfies
% $\SV_0^{\tau}(F)\le \Var_0^{\tau}(f_u)$. 
A thorough treatment of functions of bounded variation can be found in \cite{AmbrosioFuscoPallara2000}, see also \cite[Section~3.2]{HillePhillips1974} and for functions of bounded semivariation, we refer to the survey \cite{Monteiro2015}.

\begin{proposition}
    \label{prop:reg-admissibility-equivalence}
    Let $X$ and $U$ be Banach spaces, $(T(t))_{t\ge 0}$ be a $C_0$-semigroup on $X$ with generator $A$, let $\lambda \in \resSet(A)$, and let $\tau>0$.
    For the control operator $B\in\calL(U,X_{-1})$, the following are equivalent.
    \begin{enumerate}[\upshape (i)]
        \item The control operator $B$ is $\mathrm C$-admissible.
        \item The function $T(\argument)\Res(\lambda, A_{-1})B$ is of bounded semivariation on $[0,\tau]$.
        \item For each $x' \in X'$ with $\norm{x'}\le 1$, the function $B' \Res(\overline{\lambda}, A')T(\argument)'x'$ is of bounded variation on $[0,\tau]$.
        \item The control operator $B$ is $\Reg$-admissible.
    \end{enumerate}
    Moreover, setting $F(\argument):=T(\argument)\Res(\lambda,A_{-1})B$, we have that  
    \begin{equation}
        \label{eq:variation-inequality-i}
        \big(1-\norm{\lambda \Res(\lambda, A)}\big)\norm{B}_{\bbB_{\mathrm C}(U, X, \tau)} \le \Var_0^{\tau}(F(\argument)'x')  
    \end{equation}
    and
    \begin{equation}
        \label{eq:variation-inequality-ii}
        \Var_0^{\tau}(F(\argument)'x') \le \SV_0^{\tau}(F) \le \norm{1- \lambda\Res(\lambda,A)} \norm{B}_{\bbB_{\mathrm C}(U, X, \tau)}
    \end{equation}
    for all $x' \in X'$ with $\norm{x'}\le 1$.
\end{proposition}

\begin{proof}
    The equivalence of (iii) and (iv) and the inequality in~\eqref{eq:variation-inequality-i} is proved in \cite[Theorem~10.2.2]{Staffans2005}, whereas implication (iv) $\Rightarrow$ (i) is obvious.

    ``(i) $\Rightarrow$ (ii)'': The proof is along the lines of \cite[Proposition~3.1]{Travis1981}. We include the details for convenience of the reader.
    Let $B$ be $\mathrm C$-admissible and for simplicity, suppose $\tau=1$.
     Consider a partition $0=t_0<t_1<\ldots<t_n=\tau$ of $[0,1]$ and let $\epsilon< \min \modulus{t_i-t_{i-1}}$. Fix arbitrary elements $u_1,\ldots,u_{n+1}\in U$ with $\norm{u_i}\le 1$, define $u_{\epsilon}:[0,1]\to X$ as
    \[
        u_{\epsilon}(s) := \begin{cases}
                            u_i,                                            \qquad & t_{i-1}\le s\le t_i-\epsilon\\
                            u_{i+1}+(u_{i+1}-u_i)\frac{s-t_i}{\epsilon},    \qquad & t_i-\epsilon\le s\le t_i.
        \end{cases} 
    \]
    Since $u_{\epsilon}$ is continuous, admissibility implies that $\Phi_{1}u_{\epsilon}\in X$. 
    For simplicity, we set $w_i=\Res(\lambda,A_{-1})Bu_i \in \dom(A_{-1})= X$ for $1\le i\le n_1$.
    Then we can write
    \begin{align*}
        (\lambda \Res(\lambda,A)-1)\Phi_1 u_{\epsilon} &= A\Res(\lambda,A)\Phi_1 u_{\epsilon}\\
                                                    & = A \int_0^{1} T(1-s)\Res(\lambda,A_{-1})Bu_{\epsilon}(s)\dx s\\
                                                    &=  A \left[\sum_{i=1}^n \int_{t_{i-1}}^{t_i-\epsilon} T(1-s)w_i\dx s\right.\\
                                                    &\qquad\quad+ \left.\int_{t_i-\epsilon}^{t_i} T(1-s)(w_{i+1}+(w_{i+1}-w_i)\frac{s-t_i}{\epsilon}  \dx s\right]   
    \end{align*}
    
    Now, we repeat the computations in the proof of \cite[Proposition~3.1]{Travis1981}:~from the last expression, $(\lambda \Res(\lambda,A)-1)\Phi_1 u_{\epsilon}$ simplifies to
    \begin{align*}
    &\sum_{i=1}^n\Big[ \big(T(1-t_{i-1}) - T(1-t_i+\epsilon)\big)w_i%\right.\\
                                                         + \big(T(1-t_{i}+\epsilon) - T(1-t_i)\big)w_{i+1}\Big]\\
                                                        &\qquad+ \sum_{i=1}^n \left[  \frac{1}{\epsilon} \int_{t_i-\epsilon}^{t_i} T(1-s)(w_{i+1}-w_i)\dx s - T(1-t_i+\epsilon)(w_{i+1}-w_i)\right]\\
    = &  - \sum_{i=1}^n\big(T(1-t_i)-T(1-t_{i-1})\big) w_i \\
                    &\qquad+\sum_{i=1}^n \left[\frac{1}{\epsilon} \int_{t_i-\epsilon}^{t_i} T(1-s)(w_{i+1}-w_i)\dx s - T(1-t_i)(w_{i+1}-w_i)\right].
    \end{align*}
    For this reason, $\norm{ \sum_{i=1}^n \big(T(1-t_i)-T(1-t_{i-1})\big)\Res(\lambda,A_{-1}) B u_i }$ can be estimated from above by
    \begin{align*}
         \norm{(\lambda \Res(\lambda,A)-1)\Phi_1} + \sum_{i=1}^n \norm{\frac{1}{\epsilon} \int_{t_i-\epsilon}^{t_i} T(1-s)(w_{i+1}-w_i)\dx s - T(1-t_{i})(w_{i+1}-w_i)   } 
    \end{align*}
    Taking $\epsilon\to 0$ yields that $F$ is of bounded semivariation and 
    \[
        \SV_0^{\tau}(F) \le \norm{1- \lambda\Res(\lambda,A)} \norm{B}_{\bbB_{\mathrm C}(U, X, \tau)}.
    \]

    ``(ii) $\Rightarrow$ (iii)'': Let $x' \in X'$ with $\norm{x'}\le 1$.
    We need to show that $F(\argument)'x'$ is of bounded variation.
    Consider a partition $0=t_0<t_1<\ldots<t_n=\tau$ of $[0,\tau]$. Fix $\epsilon \in (0,1)$ and for each $1\le i\le n$, choose $u_i \in U$ with $\norm{u_i}\le 1$ such that
    \begin{align*}
        (1-\epsilon)\norm{ F(t_i)' x'-F(t_{i-1})'x' }_{U'}&\le \duality{F(t_i)' x'-F(t_{i-1})'x'}{u_i}  \\
                                                    &=\duality{x'}{\big(F(t_i)-F(t_{i-1})\big)u_i}.  
    \end{align*}
    Employing bounded semivariation of $F(\argument)$ together with
    \begin{align*}
        \sum_{i=1}^n \norm{F(t_i)'x'-F(t_{i-1})'x'}_{U'} &\le \frac{1}{(1-\epsilon)}\duality{x'}{\sum_{i=1}^n\big(F(t_i)-F(t_{i-1})\big)u_i} \\
                                                         &\le \frac{1}{(1-\epsilon)}\norm{\sum_{i=1}^n\big(F(t_i)-F(t_{i-1})\big)u_i  }
    \end{align*}
     yields bounded variation of $F(\argument)'x'$. Since $\epsilon>0$ was arbitrary, we also get ${\Var_0^{\tau}(F(\argument)'x')\le \SV_0^{\tau}(F)}$.
\end{proof}

\section{Duality results for control operators}
    \label{sec:control}

In \cite[Theorem~6.9]{Weiss1989a}, Weiss explored the dual relationship between $L^p$-admissible observation operators and $L^q$-admissible control operators for Hölder conjugates $p$ and $q$. The result, however, assumes strong continuity of the dual semigroup. Restricting to the sun-dual space -- the space of strong continuity of the dual semigroup -- it is natural to ask whether \cite[Theorem~6.9]{Weiss1989a} can be appropriately generalised. We explore this for control operators in the present section. 

In our first result, we show that $\mathrm C$-admissibility of the control is equivalent to $L^1$-admissibility of the dual observation operator. Keeping Proposition~\ref{prop:reg-admissibility-equivalence} in mind, the proof of the necessity in the reflexive case was given in \cite[Theorem~10.2.2]{Staffans2005} and the converse for the case $X^{\sun}=X'$ was indicated in \cite[Remark~6.10]{Weiss1989a}.

\begin{theorem}
    \label{thm:sun-dual-continuous}
    Let $X$ and $U$ be Banach spaces and let $(T(t))_{t\ge 0}$ be a $C_0$-semigroup on $X$ with generator $A$. 
    
    A control operator $B\in \calL(U, X_{-1})$ is (zero-class) $\mathrm C$-admissible if and only if the observation operator $B' \in \calL\left((X^{\sun})_1, U'\right)$ is (zero-class) $L^1$-admissible.
\end{theorem}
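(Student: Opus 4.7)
The approach is to derive both implications from a single duality identity linking $\Phi_\tau$ to the sun-dual output map of $B'$, with the isometric embedding $X \hookrightarrow (X^\sun)^*$ from sun-dual theory playing the role that strong continuity of the dual semigroup played in the classical Weiss proof. Fix $\lambda \in \resSet(A)$. Because $(X^\sun)_1 = \dom A^\sun$ sits inside $\dom A^* \subseteq (X_{-1})^*$, the pairing
\[
  \langle z, x^\sun\rangle_{X_{-1}, (X^\sun)_1} := \big\langle \Res(\lambda, A_{-1})z,\, (\lambda - A^\sun)x^\sun\big\rangle_{X, X'}
\]
is well defined and extends the $X$--$X'$ pairing. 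Using $T(t)'|_{X^\sun} = T^\sun(t)$ together with the commutation of $\Res(\lambda, A^\sun)$ with $T^\sun$, a direct computation then gives, for every $u \in \mathrm{C}([0,\tau], U)$ (and for every step function $u$) and every $x^\sun \in (X^\sun)_1$,
\[
  \langle \Phi_\tau u, x^\sun\rangle = \int_0^\tau \big\langle u(s),\, B' T^\sun(\tau - s)x^\sun\big\rangle_{U, U'} \dx s,
\]
with the integrand continuous in $s$ since $s \mapsto T^\sun(\tau-s)x^\sun$ is graph-norm continuous into $(X^\sun)_1$ and $B' \in \calL((X^\sun)_1, U')$.

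For the direction ``$\mathrm{C}$-admissible $\Rightarrow$ $L^1$-admissible on sun-dual'', assume $B$ is $\mathrm{C}$-admissible and fix $x^\sun \in (X^\sun)_1$ with $\|x^\sun\|_{X^\sun} \le 1$. Continuity of $s \mapsto B' T^\sun(s)x^\sun$ in $U'$ lets us compute its $L^1$-norm on $[0,\tau]$ as the supremum of $|\int_0^\tau \langle v(s), B' T^\sun(s)x^\sun\rangle\dx s|$ over continuous $v$ with $\|v\|_\infty \le 1$. After the change of variable $s \mapsto \tau - s$ and the duality identity, this supremum becomes $\sup_v |\langle \Phi_\tau v, x^\sun\rangle|$; using $\mathrm{C}$-admissibility of $B$ together with $|\langle y, x^\sun\rangle_{X, X'}| \le \|y\|_X \|x^\sun\|_{X^\sun}$ for $y = \Phi_\tau v \in X$, this is bounded by $\|B\|_{\bbB_{\mathrm C}(U, X, \tau)}$. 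Thus $B'$ is $L^1$-admissible for $(T^\sun(t))_{t \ge 0}$ with $\|B'\|_{\bbC_1(X^\sun, U', \tau)} \le \|B\|_{\bbB_{\mathrm C}(U, X, \tau)}$.

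For the converse, first observe that $\Phi_\tau v \in X$ for every step function $v$: piecewise, this reduces to the classical identity $\int_a^b T_{-1}(r) y \dx r \in \dom A_{-1} = X$, valid for every $y \in X_{-1}$. The duality identity combined with $L^1$-admissibility of $B'$ yields
\[
  |\langle \Phi_\tau v, x^\sun\rangle| \le \|B'\|_{\bbC_1(X^\sun, U', \tau)}\, \|v\|_\infty\, \|x^\sun\|_{X^\sun}
\]
for all $x^\sun \in (X^\sun)_1$. By density of $(X^\sun)_1$ in $X^\sun$ and continuity of $x^\sun \mapsto \langle \Phi_\tau v, x^\sun\rangle$ on $X'$, this estimate extends to every $x^\sun \in X^\sun$. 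The decisive ingredient from sun-dual theory \cite{vanNeerven1992} now enters: $X$ embeds \emph{isometrically} into $(X^\sun)^*$, so $\|\Phi_\tau v\|_X = \sup\{|\langle \Phi_\tau v, x^\sun\rangle| : x^\sun \in X^\sun,\ \|x^\sun\|_{X^\sun} \le 1\}$, giving $\|\Phi_\tau v\|_X \le \|B'\|_{\bbC_1(X^\sun, U', \tau)}\, \|v\|_\infty$ for step functions. Uniform density of step functions in $\mathrm{C}([0,\tau], U)$ then extends $\Phi_\tau$ to a bounded operator $\mathrm{C}([0,\tau], U) \to X$ with the same bound, and a comparison in $X_{-1}$ identifies this extension with the original input map, yielding the $\mathrm{C}$-admissibility of $B$. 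The matching estimates in both directions transfer the zero-class property as $\tau \downarrow 0$. The main obstacle, and the novelty over Weiss, is precisely the isometric embedding $X \hookrightarrow (X^\sun)^*$; without it one would recover $\Phi_\tau v$ only as an element of $(X^\sun)^*$, not of $X$, and the argument would stop at an estimate against sun-dual test functions.
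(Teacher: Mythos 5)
Your proof is correct in substance, and the converse direction coincides with the paper's own argument: step functions land in $X$, the duality identity plus H\"older gives the estimate against $x^{\sun}\in\dom(A^{\sun})$, density extends it to $X^{\sun}$, and the norming property of the sun-dual recovers the $X$-norm. The forward direction, however, is genuinely different from the paper's. The paper routes it through Proposition~\ref{prop:reg-admissibility-equivalence}: $\mathrm C$-admissibility is first translated (via Travis/Staffans) into bounded variation of $s\mapsto B'\Res(\overline\lambda,A')T(s)'x'$, and the $L^1$-norm of $B'T^{\sun}(\argument)x^{\sun}$ is then bounded by the total variation of that function, at the cost of the constant $C_\tau=2(\tau\modulus{\lambda}+1)\norm{1-\lambda\Res(\lambda,A)}$ plus an extra $\tau\modulus{\lambda}\norm{F(0)}$ term. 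You instead compute $\int_0^\tau\norm{B'T^{\sun}(s)x^{\sun}}\dx s$ directly as a supremum of $\modulus{\duality{\Phi_\tau v}{x^{\sun}}}$ over continuous test functions $v$ with $\norm{v}_\infty\le 1$, which bypasses the semivariation machinery entirely and yields the sharper, $\lambda$-free bound $\norm{B'}_{\bbC_1(X^{\sun},U',\tau)}\le\norm{B}_{\bbB_{\mathrm C}(U,X,\tau)}$; the price is that you must justify (by a uniform-continuity/partition argument, which is standard but should be recorded) that the $L^1$-norm of a continuous $U'$-valued function is attained as a supremum against \emph{continuous} $U$-valued functions of sup-norm at most one. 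One small correction: the embedding $X\hookrightarrow(X^{\sun})'$ is not isometric in general; by \cite[Theorem~1.3.5]{vanNeerven1992} one only has $\norm{x}_X\le M\sup\{\modulus{\duality{x}{x^{\sun}}}:\norm{x^{\sun}}\le1\}$ with $M=\limsup_{t\downarrow0}\norm{T(t)}$, which is why the paper's estimate~\eqref{eq:l1-to-c} carries that factor. This constant is harmless for both admissibility and the zero-class transfer, but your claimed bound $\norm{\Phi_\tau v}_X\le\norm{B'}_{\bbC_1(X^{\sun},U',\tau)}\norm{v}_\infty$ should include it.
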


We point out that the observation operator considered above is actually the restriction of $B': (X_{-1})'\to U'$ to the interpolation space $(X_{-1})^{\sun}=(X^{\sun})_1$. For this reason, while our result may seem like a straightforward generalisation, we emphasise that the invariance of the zero-class property is not necessarily expected.

\begin{proof}[Proof of Theorem~\ref{thm:sun-dual-continuous}]
    First, assume that $B\in \calL(U, X_{-1})$ is $\mathrm C$-admissible with input operator $\Phi_{\tau}$.
    Fix $\lambda \in \resSet(A)$ and $x^{\sun}\in \dom(A^{\sun})$.
    Setting $F(\argument):=B' \Res(\overline{\lambda}, A')T^{\sun}(\argument)$, the continuity of $s\mapsto B'T^{\sun}(s)x^{\sun}$ allows us to estimate
    \begin{align*}
        \int_0^{\tau} \norm{F(s)x^{\sun}}\dx s & =  \int_0^{\tau} \norm{F(0)x^{\sun}+\int_0^s \frac{d}{dr}F(r)x^{\sun}\dx r}\dx s \\
                                               & \le \int_0^{\tau} \norm{F(0)x^{\sun}}\dx s + \int_{0}^\tau \int_0^s \norm{\frac{d}{dr}F(r)x^{\sun}}\dx r\dx s\\
                                               & = \int_0^{\tau} \norm{F(0)x^{\sun}}\dx s + \int_{0}^\tau \int_r^{\tau} \norm{\frac{d}{dr}F(r)x^{\sun}}\dx s\dx r\\
                                               & = \int_0^{\tau} \norm{F(0)x^{\sun}}\dx s + \int_{0}^\tau (\tau-r) \norm{\frac{d}{dr}F(r)x^{\sun}}\dx r\\
                                               & \le \tau \norm{F(0)x^{\sun}} + \tau \int_{0}^\tau  \norm{\frac{d}{dr}F(r)x^{\sun}}\dx r;
    \end{align*}
    where the second equality is obtained using Fubini. As a result,
    \begin{align*}
        \int_0^{\tau} \norm{B'T^{\sun}(s)x^{\sun}} \dx s 
                                                        & = \int_0^{\tau} F(s)(\overline{\lambda}-A')x^{\sun}\dx s\\
                                                        & \le \modulus{\lambda} \int_0^{\tau} \norm{F(s)x^{\sun}}\dx s + \int_0^{\tau} \norm{\frac{d}{ds}F(s)x^{\sun}}\dx s\\
                                                        & \le \tau\modulus{\lambda} \norm{F(0)}\norm{x^{\sun}} + (\tau\modulus{\lambda}+1)\int_{0}^\tau  \norm{\frac{d}{dr}F(s)x^{\sun}}\dx s\\
                                                        & = \tau\modulus{\lambda} \norm{F(0)}\norm{x^{\sun}} + (\tau\modulus{\lambda}+1)\Var_0^{\tau}\left(F(\argument)x^{\sun}\right)
    \end{align*}
    because $\int_0^{\tau} \norm{\frac{d}{ds}F(s)x^{\sun}  }\dx s$ is the total variation of $F(\argument)x^{\sun}$ on $[0,\tau]$.
    Due to $\mathrm C$-admissibility of $B$, we can now apply Proposition~\ref{prop:reg-admissibility-equivalence}(iii) -- and, in particular, the inequality~\eqref{eq:variation-inequality-ii} -- to obtain that
    \begin{equation}
        \label{eq:c-to-L1}
        \int_0^{\tau} \norm{B'T^{\sun}(s)x^{\sun}} \dx s \le \tau\modulus{\lambda} \norm{F(0)}\norm{x^{\sun}} + C_{\tau} \norm{B}_{\bbB_{\mathrm C}(U, X, \tau)}\norm{x^{\sun}}
    \end{equation}
    with $C_{\tau}:=2(\tau\modulus{\lambda}+1)\norm{1-\lambda \Res(\lambda,A)}$.
    It follows that $B' \in \bbC_1(X^{\sun}, U',(T^{\sun}(t))_{t\ge 0})$.

    Conversely, let $B' \in \bbC_1(X^{\sun}, U',(T^{\sun}(t))_{t\ge 0})$. We show that $B$ is $\Reg$-admissible. First of all, for $u \in \mathrm{T}([0,\tau], U)$ -- the space of $U$-valued step functions, of course
    \[
            \Phi_{\tau}u:= \int_0^{\tau} T_{-1}(\tau-t) Bu(t)\dx t \in X.
    \]
    By density of step functions in regulated functions, we therefore only need to show that there exists $K>0$ such that $\norm{\Phi_{\tau}u}_X \le K \norm{u}_{\infty}$ for all $u \in \mathrm{T}([0,\tau], U)$.

    To this end, fix $x^{\sun} \in \dom(A^{\sun})$, and
    for each $u \in \mathrm{T}([0,\tau], U)$ estimate
    \begin{align*}
        \modulus{\duality{\Phi_\tau u}{x^{\sun}}} & =  \modulus{ \duality{\int_0^{\tau} T_{-1}(\tau-t) Bu(t)\dx t}{x^{\sun}}  }\\
                                                    & = \modulus{ \int_0^{\tau} \duality{u(t)}{B'(T_{-1})^{\sun}(\tau-t)x^{\sun}} \dx t  }\\
                                                    & = \modulus{ \int_0^{\tau} \duality{u(t)}{B' T^{\sun}(\tau-t)x^{\sun}} \dx t  }\\
                                                    & \le \norm{B'}_{\bbC_1(X^{\sun}, U',\tau   )}\norm{x^{\sun}}\norm{u}_{\infty}.
    \end{align*}
    From the density of $\dom(A^{\sun})$ in $X^{\sun}$, we infer that the above inequality also holds for each $x^{\sun}\in X^{\sun}$. 
     Consequently, the norming property of the sun-dual \cite[Theorem~1.3.5]{vanNeerven1992} yields the desired estimate:
    \begin{equation}
        \label{eq:l1-to-c}
        \norm{\Phi_{\tau}u}_X \le \norm{B'}_{\bbC_1(X^{\sun}, U',\tau   )}\limsup_{t\downarrow 0}{\norm{T(t)}} \norm{u}_\infty.
    \end{equation}
    
    Lastly, the zero-class equivalence is immediate from estimates~\eqref{eq:c-to-L1} and~\eqref{eq:l1-to-c}.
\end{proof}

Next, we generalise \cite[Theorem~6.9(ii)]{Weiss1989a} to go from $L^p$-admissibility of the control to $L^q$-admissibility of its dual, where $\frac{1}{p}+\frac{1}{q}=1$.

\begin{theorem}
    \label{thm:weiss-sun-dual-control}
    Let $X$ and $U$ be Banach spaces, let $(T(t))_{t\ge 0}$ be a $C_0$-semigroup on $X$ with generator $A$, and 
    let $B\in \calL(U, X_{-1})$.
    
    For Hölder conjugates $p,q\in [1,\infty]$, if
    $B \in \bbB_p(U, X, (T(t))_{t\ge 0}))$,
    then $B' \in \bbC_q(X^{\sun},U',(T^{\sun}(t))_{t\ge 0})$ with 
    \[
        \norm{B'}_{\bbC_q(X^{\sun},U',\tau)} \le \norm{B}_{\bbB_p(U, X, \tau)}.
    \]
    The converse is true if $p<\infty$.
\end{theorem}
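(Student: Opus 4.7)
The proof will closely mimic that of Theorem~\ref{thm:sun-dual-continuous}, replacing the $\mathrm{C}$/$L^1$-Hölder pairing by the general $L^p$/$L^q$ pairing. The common starting point, valid as in that proof for step functions $u \in T([0,\tau], U)$ and $x^{\sun} \in \dom(A^{\sun})$, is the identity
\[
    \duality{\Phi_\tau u}{x^{\sun}} = \int_0^\tau \duality{u(t)}{B' T^{\sun}(\tau - t) x^{\sun}} \dx t.
\]
Note that for step functions $u$, one has $\Phi_\tau u \in X$ since each piece $\int_a^b T_{-1}(s) B u_i \dx s$ lies in $\dom(A_{-1}) = X$.

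For the forward direction, assume $B \in \bbB_p(U, X, (T(t))_{t\ge 0})$. Fix $x^{\sun} \in \dom(A^{\sun})$ and consider the continuous function $g(t) := B' T^{\sun}(\tau - t) x^{\sun}$. The left-hand side of the displayed identity is bounded by $\norm{\Phi_\tau u}_X \norm{x^{\sun}} \le \norm{B}_{\bbB_p(U,X,\tau)} \norm{u}_{L^p} \norm{x^{\sun}}$. Taking the supremum over step functions $u$ with $\norm{u}_{L^p} \le 1$ extracts the $L^q([0,\tau], U')$-norm of $g$ by standard $L^p$/$L^q$ duality; in the case $q=\infty$ (that is, $p=1$), the essential sup becomes a pointwise sup via Lebesgue differentiation applied to $u = u_0 \chi_{[t_0, t_0 + \delta]} / \delta$ together with the continuity of $g$. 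After changing variables $s = \tau - t$ and extending from $\dom(A^{\sun})$ to $X^{\sun}$ by density, this yields the claimed inequality $\norm{B'}_{\bbC_q(X^{\sun}, U', \tau)} \le \norm{B}_{\bbB_p(U, X, \tau)}$.

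For the converse, assume $p < \infty$ and $B' \in \bbC_q(X^{\sun}, U', (T^{\sun}(t))_{t\ge 0})$. For step $u$, $\Phi_\tau u \in X$ as noted above, and Hölder's inequality applied to the starting identity gives
\[
    \modulus{\duality{\Phi_\tau u}{x^{\sun}}} \le \norm{u}_{L^p}\, \norm{B'}_{\bbC_q(X^{\sun}, U', \tau)}\, \norm{x^{\sun}}
\]
for $x^{\sun} \in \dom(A^{\sun})$, hence for all $x^{\sun} \in X^{\sun}$ by density. The norming property of $X^{\sun}$ \cite[Theorem~1.3.5]{vanNeerven1992}, applied exactly as in~\eqref{eq:l1-to-c}, converts this into a bound $\norm{\Phi_\tau u}_X \le M \norm{B'}_{\bbC_q(X^{\sun},U',\tau)} \norm{u}_{L^p}$ with a constant $M$ depending only on $(T(t))_{t\ge 0}$. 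Finally, density of step functions in $L^p([0,\tau], U)$ -- valid precisely for $p<\infty$ -- extends $\Phi_\tau$ to a bounded operator $L^p([0,\tau], U) \to X$.

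The main subtlety is the case $q = \infty$ in the forward direction, where vector-valued $L^\infty$ duality is more delicate, but the continuity of $g$ inherited from the sun-dual framework allows us to argue concretely by pointwise evaluation. The restriction $p < \infty$ in the converse is essential, since step functions fail to be dense in $L^\infty$; this is precisely the gap bridged by Theorem~\ref{thm:sun-dual-continuous}, where the role of $L^\infty$ is taken over by $\mathrm{C}$.
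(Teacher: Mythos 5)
Your proof is correct and follows essentially the same route as the paper: the forward direction extracts the $L^q$-norm of $t\mapsto B'T^{\sun}(\tau-t)x^{\sun}$ by pairing against the unit ball of $L^p([0,\tau],U)$ and using $L^p$-admissibility of $B$, while the converse combines H\"older's inequality with the norming property of $X^{\sun}$ and density of step functions in $L^p$ for $p<\infty$. The only cosmetic difference is that the paper invokes the norming property of vector-valued $L^p$-spaces \cite[Proposition~1.3.1]{HytoenenVanNeervenVeraarWeis2016} wholesale, whereas you restrict to step functions and treat the $q=\infty$ case by hand via Lebesgue differentiation and continuity of the integrand -- both are fine.
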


\begin{remarks}
    (a) 
    The condition $p<\infty$ cannot be dropped in the converse part of Theorem~\ref{thm:weiss-sun-dual-control}; see \cite[Remark~2.4]{JacobSchwenningerWintermayr2022}. The example in the reference -- which has also appeared in the context of maximal regularity (see, \cite[Page~48]{EberhardtGreiner1992} and \cite[Example~2.3]{JacobSchwenningerWintermayr2022}) and admissibility \cite[Remark~4.8 and Page~26]{AroraGlueckPaunonenSchwenninger2024} -- satisfies $X^{\sun} = X'$.
    
    (b)
    Actually, if $X$ is reflexive, then $L^1$-admissibility of the dual of a control operator $B$ does imply $L^\infty$-admissibility of $B$ \cite[Theorem~6.9(ii)]{Weiss1989a}. 
    This begs the questions whether sun-reflexivity of the semigroup is a sufficient condition to obtain the converse in Theorem~\ref{thm:weiss-sun-dual-control} for the case $p=\infty$. However, an evidence to the contrary is again provided by the example in \cite[Remark~2.4]{JacobSchwenningerWintermayr2022}.
    
\end{remarks}

\begin{proof}[Proof of Theorem~\ref{thm:weiss-sun-dual-control}]
    Let $B \in \bbB_p(U, X, (T(t))_{t\ge 0}))$. By the norming property of Banach space valued $L^p$-spaces \cite[Proposition~1.3.1]{HytoenenVanNeervenVeraarWeis2016}, we can compute the norm
    \begin{align*}
        \norm{B'T^{\sun}(\argument)x^{\sun}}_q & = \sup_{\substack{ u \in L^p([0,\tau], U)\\ \norm{u}_p\le 1}} \modulus{\int_0^{\tau} \duality{u(t)}{B'T^{\sun}(\tau-t)x^{\sun}}\dx t}\\
                                                    & = \sup_{\substack{ u \in L^p([0,\tau], U)\\ \norm{u}_p\le 1}} \modulus{\int_0^{\tau} \duality{u(t)}{B'(T_{-1})^{\sun}(\tau-t)x^{\sun}}\dx t}\\
                                                    & = \sup_{\substack{ u \in L^p([0,\tau], U)\\ \norm{u}_p\le 1}} \modulus{ \duality{\int_0^{\tau}T_{-1}(\tau-t)Bu(t)\dx t}{x^{\sun}}}\\
                                                    & \le \norm{B}_{\bbB_p(U, X,\tau)} \norm{x^{\sun}}
    \end{align*}
    for all $x^{\sun} \in \dom(A^{\sun})$ and so $B' \in \bbC_q(X^{\sun},U',(T^{\sun}(t))_{t\ge 0})$.

    Conversely, let $B' \in \bbC_q(X^{\sun},U',(T^{\sun}(t))_{t\ge 0})$. Employing Hölder's inequality, we can argue exactly as in  Theorem~\ref{thm:sun-dual-continuous}, to obtain a constant $K>0$ such that
    $\norm{\Phi_{\tau}u} \le K\norm{u}_p$ for all step functions $u\in \mathrm{T}([0,\tau], U)$. If $p<\infty$, this implies that $B$ is $L^p$-admissible
    by density of the step functions in $L^{p}([0,\tau],U)$.
\end{proof}

\section{Duality results for observation operators}
    \label{sec:observation}

In this section, we look at dual of observation operators, i.e., analogous to \cite[Theorem~6.9(i)]{Weiss1989a}, we ask whether the equivalence 
\begin{equation}
    \label{eq:admissibility-observation-sun-dual}
    C \in \bbC_p(X, Y, (T(t))_{t\ge 0}) \stackrel{?}{\iff} C'\in \bbB_q(Y', X^{\sun}, (T^{\sun}(t))_{t\ge 0})
\end{equation}
holds for Hölder conjugates $p,q\in [1,\infty]$. Note that given $C\in \calL(X_1,Y)$, we only know that $\Ima C'\subseteq (X_1)'=(X')_{-1}$. Therefore, in order for the second inclusion in~\eqref{eq:admissibility-observation-sun-dual} to be meaningful, we must a priori assume that $\Ima C'  \subseteq \left(X^{\sun}\right)_{-1}$. Adapting the arguments of \cite[Theorem~6.9]{Weiss1989a}, we first settle the reverse implication in~\eqref{eq:admissibility-observation-sun-dual}:

\begin{theorem}
    \label{thm:weiss-sun-dual-observation-necessary}
    Let $X$ and $Y$ be Banach spaces, let $(T(t))_{t\ge 0}$ be a $C_0$-semigroup on $X$ with generator $A$, and 
    let $C\in \calL(X_1,Y)$ be such that $C'(Y')\subseteq (X_1)^{\sun}= \left(X^{\sun}\right)_{-1}$. 
    
    Let $p,q\in [1,\infty]$ be Hölder conjugates. If $C'\in \bbB_q(Y', X^{\sun}, (T^{\sun}(t))_{t\ge 0})$, then $C \in \bbC_p(X, Y, (T(t))_{t\ge 0})$ with
    $
        \norm{C}_{\bbC_p(X, Y, \tau)} \le \norm{C'}_{\bbB_q(Y', X^{\sun}, \tau)}.
    $
\end{theorem}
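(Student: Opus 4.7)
The plan is to dualise the sufficient direction of Theorem~\ref{thm:weiss-sun-dual-control}, using the norming property of vector-valued $L^p$-spaces. Fix $x \in X_1$ -- dense in $X$ and with $t\mapsto CT(t)x$ continuous (crucial for the $p=\infty$ case) -- and a $Y'$-valued step function $g$ on $[0,\tau]$. Writing $\tilde g(s):=g(\tau-s)$, the standing assumption $C'(Y')\subseteq (X^\sun)_{-1}$ together with the hypothesised $L^q$-admissibility of $C'$ for $(T^{\sun}(t))_{t\ge 0}$ gives
\[
    v := \int_0^\tau (T^\sun)_{-1}(\tau-s)\, C'\tilde g(s)\, ds \;\in\; X^\sun, \qquad \norm{v}_{X^\sun} \le \norm{C'}_{\bbB_q(Y',X^\sun,\tau)}\,\norm{g}_q,
\]
the integral being a priori interpreted in $(X^\sun)_{-1}$.

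Next I would test $v$ against $x$. Under the identification $(X_1)^\sun = (X^\sun)_{-1}$, the evaluation $\langle\argument, x\rangle$ is a continuous linear functional on $(X^\sun)_{-1}$, and hence commutes with the Bochner integral. Applying the adjoint relation $\langle (T^\sun)_{-1}(u)\phi,\, x\rangle = \langle \phi,\, T(u)x\rangle$ for $\phi\in (X^\sun)_{-1}$ and $x\in X_1$ (extending the standard $T^\sun/T$ duality from $X^\sun\times X$ by density), the definition of $C'$, and the change of variable $u=\tau-s$, I arrive at the core identity
\[
    \langle v,\, x\rangle = \int_0^\tau \langle g(u),\, CT(u)x\rangle_{Y',Y}\, du.
\]
Combined with the bound on $\norm{v}_{X^\sun}$ and the trivial inequality $|\langle v,x\rangle| \le \norm{v}_{X^\sun}\norm{x}_X$ (valid since $v\in X^\sun$), this produces the estimate $\bigl|\int_0^\tau \langle g(u), CT(u)x\rangle\, du\bigr| \le \norm{C'}_{\bbB_q(Y',X^\sun,\tau)}\,\norm{g}_q\,\norm{x}_X$ for every $Y'$-valued step function $g$ with $\norm{g}_q\le 1$.

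To conclude, I would take the supremum over such $g$ (which are dense in $L^q([0,\tau],Y')$) and invoke the norming property of vector-valued $L^p$-spaces -- exactly as in the proof of Theorem~\ref{thm:weiss-sun-dual-control} -- to deduce $\norm{CT(\argument)x}_{L^p([0,\tau],Y)} \le \norm{C'}_{\bbB_q(Y',X^\sun,\tau)}\norm{x}_X$ for every $x\in X_1$. Density of $X_1$ in $X$ then extends $\Psi_\tau$ boundedly to $X\to L^p([0,\tau],Y)$ with the claimed norm bound, i.e., $C\in \bbC_p(X,Y,(T(t))_{t\ge 0})$.

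The main obstacle is the bookkeeping around the identification $(X_1)^\sun=(X^\sun)_{-1}$ and the extended duality pairing; in particular, the adjoint formula for the extrapolated sun-dual semigroup acting on $(X^\sun)_{-1}$ against $X_1$ is what legitimises shifting the semigroup from the primal to the sun-dual side under the integral. A secondary subtlety is the norming property at $p=\infty$, where continuity of $t\mapsto CT(t)x$ on $X_1$ is needed so that the sup-norm can be recovered as a supremum of integrals against $\delta$-like step functions in $L^1([0,\tau],Y')$.
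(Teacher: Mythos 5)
Your proposal is correct and follows essentially the same route as the paper's proof: pair $CT(\argument)x$ against $L^q([0,\tau],Y')$ via the norming property of vector-valued $L^p$-spaces, shift the semigroup and $C$ to the sun-dual side so that the resulting integral is the input map of $C'$ landing in $X^\sun\subseteq X'$, and conclude for $x\in X_1$ by density. The only cosmetic difference is that you first restrict to step functions (note that for $q=\infty$ these are not dense in $L^q([0,\tau],Y')$, but the norming property is stated for simple functions anyway, so nothing is lost).
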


\begin{proof}
     Suppose that $C'\in \bbB_q(Y', X^{\sun}, (T^{\sun}(t))_{t\ge 0})$ and fix $\tau>0$. 
     Since $CT(\argument)x$ maps $X_1$ to $L^p([0,\tau], Y)$, so for each $x\in \dom(A)$, we can treat $CT(\argument)x$ as an element of $L^p([0,\tau], Y'')$.
     Therefore, we can employ the norming property of Banach space valued $L^p$-spaces \cite[Proposition~1.3.1]{HytoenenVanNeervenVeraarWeis2016} to obtain 
    \begin{align*}
        \norm{CT(\argument)x}_p &= \sup_{ \substack {y \in L^q([0,\tau], Y')\\ \norm{y}_q\le 1}} \modulus{ \int_0^{\tau}\duality{ CT(\tau-t)x}{ y(t)}\dx t}\\
                          & = \sup_{ \substack {y \in L^q([0,\tau], Y')\\ \norm{y}_q\le 1}} \modulus{ \duality{x}{\int_0^{\tau} T'(\tau-t)C'y(t)\dx t } }  \\
                          & = \sup_{ \substack {y \in L^q([0,\tau], Y')\\ \norm{y}_q\le 1}} \modulus{ \duality{x}{\int_0^{\tau} (T^{\sun})_{-1}(\tau-t)C'y(t)\dx t } }  \\
                          & \le \norm{C'}_{\bbB_q(Y',X^{\sun},\tau)}   \norm{x} 
    \end{align*}
    for all $x \in \dom(A)$.
    So, $C \in \bbC_p(X, Y, (T(t))_{t\ge 0})$ with the asserted inequality.
\end{proof}

\begin{remark}
    For $p=1$, Theorem~\ref{thm:weiss-sun-dual-observation-necessary} can be strengthened as follows:~if the control operator $C'\in \calL(Y',(X^{\sun})_{-1})$ is (zero-class) $\mathrm C$-admissible, then the observation operator $C\in \calL(X_1,Y)$ is (zero-class) $L^1$-admissible. Indeed, (zero-class) $\mathrm C$-admissibility of $C'$ implies (zero-class) $L^1$-admissibility of $C'' \in \calL((X^{\sun\sun})_1,Y'')$ by Theorem~\ref{thm:sun-dual-continuous}, and in turn, the claim.
\end{remark}

The forward implication in~\eqref{eq:admissibility-observation-sun-dual} is slightly subtle. While the case $p>1$ yields the desired implication, the case $p=1$ requires an additional assumption of zero-class admissibility, which emerges organically from our proof technique. Moreover, we are only able to show $\mathrm C$-admissibility of the control $C'$ in this case.

\begin{theorem}
    \label{thm:weiss-sun-dual-observation-sufficient-zero-class}
    Let $X$ and $Y$ be Banach spaces, let $(T(t))_{t\ge 0}$ be a $C_0$-semigroup on $X$,  
    let $C\in \calL(X_1,Y)$ be such that $C'(Y')\subseteq (X_1)^{\sun}= \left(X^{\sun}\right)_{-1}$, and let $p,q\in [1,\infty]$ be Hölder conjugates. 

    Assume that the observation operator $C$ is $L^p$-admissible. If $p>1$, then the control operator $C'\in \calL(Y',(X^{\sun})_{-1})$ is $L^q$-admissible. If $p=1$ and the admissibility of $C$ is zero-class, then the control operator $C'$ is 
    $\mathrm C$-admissible. In these cases,
        \[
            \norm{C'}_{\mathrm Z(Y',X^{\sun},\tau)} \le \norm{C}_{\bbC_p(X,Y,\tau)}
        \]
    with $\mathrm Z=\bbB_q$ and $\mathrm Z=\bbB_{\mathrm C}$ respectively.
\end{theorem}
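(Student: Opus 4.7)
The plan is to bound the input map associated to $C'$, namely $\Phi_\tau^{C'}: y \mapsto \int_0^\tau (T^\sun)_{-1}(\tau-t) C' y(t) \dx t$, first on step functions $y \in T([0,\tau], Y')$ and then extend by density. First I would observe that when $y = \sum_{i=1}^n \chi_{[s_{i-1}, s_i]} y_i'$ is a step function, $\Phi_\tau^{C'} y$ is a finite sum of terms of the form $\bigl(\int_a^b (T^\sun)_{-1}(s) \dx s\bigr) C' y_i'$, and each such integral lies in $X^\sun$ by the standard semigroup identity $A^\sun_{-1} \int_a^b (T^\sun)_{-1}(s) z \dx s = (T^\sun)_{-1}(b) z - (T^\sun)_{-1}(a) z$ valid for any $z \in (X^\sun)_{-1}$. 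Thus $\Phi_\tau^{C'} y \in X^\sun$ whenever $y$ is a step function.

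Next, to bound $\norm{\Phi_\tau^{C'} y}_{X^\sun}$, which coincides with the $X'$-norm on this subspace, I would pair with $x \in \dom(A)$ and invoke the identification $(X^\sun)_{-1} = (X_1)^\sun$, under which $(T^\sun)_{-1}(s)$ acts as the dual of the restricted semigroup $T_1 := T(s)|_{X_1}$. This should yield the formula
\[
  \duality{\Phi_\tau^{C'} y}{x} = \int_0^\tau \duality{y(t)}{C T(\tau-t) x} \dx t.
\]
Hölder's inequality combined with the $L^p$-admissibility of $C$ then gives
\[
  \modulus{\duality{\Phi_\tau^{C'} y}{x}} \le \norm{y}_q \, \norm{C}_{\bbC_p(X,Y,\tau)} \, \norm{x}_X.
\]
Since $\dom(A)$ is dense in $X$, this upgrades to $\norm{\Phi_\tau^{C'} y}_{X^\sun} \le \norm{y}_q \, \norm{C}_{\bbC_p(X,Y,\tau)}$ for every step function $y$.

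The density step splits according to $p$. For $p>1$ so that $q<\infty$, step functions are dense in $L^q([0,\tau], Y')$, and since $X^\sun$ is closed in $X'$ (hence complete), $\Phi_\tau^{C'}$ extends to a bounded operator $L^q([0,\tau], Y') \to X^\sun$ obeying the same estimate; the extension agrees with the original Bochner integral in $(X^\sun)_{-1}$ by continuity of the map $\Phi_\tau^{C'}: L^1([0,\tau], Y') \to (X^\sun)_{-1}$. For $p=1$ so that $q=\infty$, step functions fail to be dense in $L^\infty$, but by uniform continuity on $[0,\tau]$ they are dense in $\mathrm C([0,\tau], Y')$ with the sup norm, yielding $\mathrm C$-admissibility of $C'$ with $\norm{C'}_{\bbB_{\mathrm C}(Y', X^\sun, \tau)} \le \norm{C}_{\bbC_1(X,Y,\tau)}$. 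The zero-class hypothesis on $C$ is expected to enter precisely at this endpoint, securing the compatibility of the $X^\sun$-limit with the $(X^\sun)_{-1}$-limit in the borderline $L^\infty$ setting so that $\Phi_\tau^{C'} y$ can be concluded to lie in $X^\sun$ for every continuous $y$, not merely for step-function approximants.

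The principal obstacle will be Step~2: carefully deploying $(X_1)^\sun = (X^\sun)_{-1}$ together with the adjoint relation $(T^\sun)_{-1}(s) = T_1(s)'$ on $(X_1)^\sun$ so as to move both $C'$ and the semigroup across the duality pairing and obtain the clean expression above. Once this pairing formula is secure, Hölder's inequality and the density extensions are essentially routine.
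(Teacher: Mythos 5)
Your route is genuinely different from the paper's. The paper first imports from Weiss's argument that $\Ima\Phi_\tau\subseteq X'$ with $\norm{\Phi_\tau}_{\calL(L^q([0,\tau],Y'),X')}\le\norm{C}_{\bbC_p(X,Y,\tau)}$, and then upgrades membership from $X'$ to $X^{\sun}$ by directly verifying the defining property of the sun-dual, namely $\norm{T'(s)\Phi_\tau y-\Phi_\tau y}_{X'}\to 0$ as $s\downarrow 0$; this is done via the identity $T'(s)\Phi_{\tau}y-\Phi_{\tau}y =T'(\tau)\Phi_sy- \Phi_s y_{\tau} +\Phi_{\tau}(y_s-y)$ (and its analogue for $p>1$), where for $q<\infty$ the terms vanish because $\norm{y}_{L^q([0,s])}\to0$ and $\norm{y_s-y}_{L^q}\to0$, while for $q=\infty$ one needs $\norm{\Phi_s}\to 0$, i.e.\ zero-class admissibility of $C$, to kill the first two terms. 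You instead observe that $\Phi_\tau$ maps step functions into $X^\sun$ automatically (via $\int_a^b(T^\sun)_{-1}(s)z\dx s\in\dom((A^\sun)_{-1})=X^\sun$), establish the norm bound on step functions by the same duality pairing, and conclude by density plus closedness of $X^\sun$ in $X'$. For $p>1$ your argument is correct and gives the stated conclusion; the pairing formula, the passage from $\dom(A)$ to $X$, and the identification of the extension with the Bochner integral (both limits live in $(X_1)'$) all go through.

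The problem is the $p=1$ case, which as written you have not actually proved. Your only engagement with the zero-class hypothesis is the sentence that it ``is expected to enter'' to secure ``compatibility of the $X^\sun$-limit with the $(X^\sun)_{-1}$-limit'' --- but that compatibility holds unconditionally: if $y_n\to y$ uniformly with $y_n$ step functions, then $\Phi_\tau y_n$ converges in $X'$ (by your uniform bound) and in $(X^\sun)_{-1}$ (by boundedness of $\Phi_\tau$ on $L^1$), both embeddings into $(X_1)'$ are continuous, so the limits coincide and no zero-class is needed anywhere. You are therefore in an uncomfortable position: either your density argument is airtight, in which case you have proved the theorem \emph{without} the zero-class hypothesis --- thereby settling the open question the authors explicitly raise in the paragraph following the theorem, a claim that cannot be made on the strength of an outline and would need every step (in particular the membership $\int_a^b(T^\sun)_{-1}(s)C'y_i'\dx s\in X^\sun$ and the limit identification) written out and checked against a potential counterexample --- or there is an error you have not located, in which case the $p=1$ case collapses. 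Deferring the decisive point to a hypothesis you cannot place in the argument is a genuine gap: you must either commit to and fully justify the stronger statement, or find where your argument actually uses (or fails without) zero-class, as the paper's translation-identity proof does explicitly.
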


\begin{proof}
    Let  $C \in \bbC_p(X, Y, (T(t))_{t\ge 0})$. For each $\tau>0$, the input operator $\Phi_{\tau}: L^q([0,\tau], Y')\to (X^{\sun})_{-1}$ given by
    \[
        \Phi_{\tau}: y \mapsto \int_0^{\tau} (T^{\sun})_{-1}(\tau-t)C'y(t)\dx t,
    \]
    is well-defined because $C'(Y')\subseteq \left(X^{\sun}\right)_{-1}$ and the (extrapolated) sun-dual semigroup is strongly continuous.
    Note from the proof of the first part \cite[Theorem~6.9(i)]{Weiss1989a} -- observing that the strong continuity of the dual semigroup was needed in this part of the argument merely for the integral in the definition of the input operator $\Phi_{\tau}$ to be well-defined -- that $\Ima \Phi_{\tau} \subseteq X'$ and
    \begin{equation}
        \label{eq:norm-of-dual-input}
        \norm{\Phi_{\tau}}_{\calL(L^q([0,\tau], Y'), X')}\le \norm{C}_{\bbC_p(X,Y,\tau)}
    \end{equation}

    \emph{Case 1}: Assume that $p>1$, fix an element $y \in L^q([0,\tau], Y')$, and extend $y$ by $0$ outside $[0,\tau]$. For $s\ge 0$, we write
    \[
        y_s:= y(\argument+s) \in L^q([0,\tau], Y').
    \]
    For each $0\le s<\tau$, we obtain
    \begin{align*}
        T'(s)\Phi_{\tau}y-\Phi_{\tau}y  
                                            & =  \int_0^{\tau} (T')_{-1}(\tau+s-t)C'y(t)\dx t - \Phi_{\tau}y  \\
                                            & = \int_0^{\tau+s} (T')_{-1}(\tau+s-t)C'y(t)\dx t - \Phi_{\tau}y \\
                                            & = \int_0^{s} (T')_{-1}(\tau+s-t)C'y(t)\dx t+\Phi_{\tau}y_s - \Phi_{\tau}y\\
                                            & = T'(\tau)\Phi_sy+\Phi_{\tau}(y_s-y).
    \end{align*}
    Applying triangle inequality and using the fact that $\Phi_s y \in X'$, we estimate
    \[
        \norm{T'(s)\Phi_{\tau}y-\Phi_{\tau}y}_{X'} \le \norm{T'(\tau)\Phi_s y }_{X'}+\norm{\Phi_{\tau}}_{\calL(L^q([0,\tau], Y'), X')} \norm{y_s-y}_{L^q([0,\tau], Y')}.
    \]
    Consequently,~\eqref{eq:norm-of-dual-input} along with $s<\tau$ yields
    \[
        \norm{T'(s)\Phi_{\tau}y-\Phi_{\tau}y}_{X'} \le \norm{C}_{\bbC_p(X,Y,\tau)}\left(
                                            \norm{T'(\tau)}\norm{y}_{L^{q}([0,s],Y')}  + \norm{y_s-y}_{L^q([0,\tau], Y')}\right).
    \]
    As $q<\infty$, both norms involving $y$  converge to $0$ as $s\downarrow 0$. 
    By definition of the sun-dual space, we infer that $\Phi_{\tau}y \in X^{\sun}$ and conclude the $L^q$-admissibility of $C'$.

    \emph{Case 2}: Let $C$ be zero-class $L^1$-admissible. Extend a fixed $y \in \mathrm C([0,\tau], Y')$ constantly outside $[0,\tau]$, so that 
    $
        y_s:= y(\argument+s) \in \mathrm C([0,\tau], Y').
    $
    In what follows, we consider the restriction of $\Phi_{\tau}$ to $\mathrm C([0,\tau], Y')$. For fixed $0\le s<\tau$, observe that
    \begin{align*}
          T'(s)\Phi_{\tau} y  - T'(\tau)\Phi_{s}y
                                            & =  \int_{0}^{\tau} (T')_{-1}(\tau+s-t)y(t)\dx t - \int_{0}^{s} (T')_{-1}(\tau+s-t)y(t)\dx t\\
                                            & = \int_s^{\tau} (T')_{-1}(\tau+s-t)y(t)\dx t
    \end{align*}
    which upon subtracting from
    \[
        \Phi_{\tau} y_s =   \int_0^{\tau} (T')_{-1}(\tau-t)y_s(t)\dx t = \int_s^{\tau+s} (T')_{-1}(\tau+s-t)y(t)\dx t
    \]
    gives us that
    \[
        \Phi_{\tau} y_s - T'(s)\Phi_{\tau} y  + T'(\tau)\Phi_{s}y  = \int_{\tau}^{\tau+s} (T')_{-1}(\tau+s-t)y(t)\dx t=   \Phi_sy_{\tau}.
    \]
    Therefore, in this case, we obtain that
    \[
        T'(s)\Phi_{\tau}y-\Phi_{\tau}y =T'(\tau)\Phi_sy- \Phi_s y_{\tau} +\Phi_{\tau}(y_s-y).
    \]
    As in Case 1, thanks to the triangle inequality, the zero-class admissibility and continuity of $y$, allow us to deduce that $\Phi_{\tau}y \in X^{\sun}$.
    
    Finally, the asserted inequality in the statement of the theorem holds by~\eqref{eq:norm-of-dual-input} and the closed graph theorem.
\end{proof}

We do not know whether the zero-class assumption in Theorem~\ref{thm:weiss-sun-dual-observation-sufficient-zero-class} can be dropped in the case of $p=1$, nor do we know if the assertion can be strengthened to $L^\infty$-admissibility of $C$.  While, $X^{\sun}=X'$ is sufficient for both \cite[Theorem~6.9(i)]{Weiss1989a}, another situation can be constructed for the case of positive systems. In light of the recent interest on infinite-dimensional positive systems \cite{Wintermayr2019, AroraGlueckPaunonenSchwenninger2024, Gantouh2022a, Gantouh2023}, we find it worthwhile to mention it in the following.

A non-empty subset $X_+$ of a Banach space $X$ is called a \emph{cone} if $\alpha X_+ +\beta X_+ \subseteq X_+$ for all $\alpha,\beta\ge 0$ and $X_+ \cap (-X_+)=\{0\}$. The cone $X_+$ induces a natural order on $X$ given by $x\le y$ if and only if $y-x\in X_+$. The Banach space $X$ together with a closed cone $X_+$ is called an \emph{ordered Banach space}.
We refer to \cite{AliprantisBurkinshaw2006, AliprantisTourky2007, AbramovichAliprantis2001} for the theory of ordered Banach spaces.

Closed subspaces of ordered Banach spaces are endowed with the induced order rendering them ordered Banach spaces as well.
The cone  $X_+$ is called \emph{generating} if $X=X_+ - X_+$ and it is called \emph{normal} if there exists $M\ge 1$ such that $0\le x\le y$ implies $\norm{x}\le M\norm{y}$. For example, $L^p(\Omega,\mu)$ with $p\in [1,\infty]$, $\mathrm C(K)$ for a compact set $K$, and $\mathrm C_0(L)$ for a locally compact set $L$ are ordered Banach spaces with the canonical cone being generating and normal.
The norm on $X$ is said to be \emph{additive on the positive cone} if
\[
    \norm{x+y}=\norm{x}+\norm{y}\quad \text{for all }x,y\in E_+.
\]
Both finite-dimensional spaces and $L^1(\Omega,\mu)$ fall into this category.  A non-empty set $C\subseteq X_+$ is called a \emph{face} of $X_+$ if for all $x,y,z\in X$, the inequality $0\le y\le x+z$ along with $x,z\in C$ implies $y\in C$.
Lastly, an operator $T$ between ordered Banach spaces $X$ and $Y$ is called \emph{positive} if $TX_+\subseteq Y_+$. The set of positive linear functionals on $X$ form a cone and turn $X'$ into an ordered Banach space. A $C_0$-semigroup on an ordered Banach space is called \emph{positive} if each semigroup operator is positive. In fact, the associated extrapolation space is also an ordered Banach space. For the definition and a  detailed analysis of the order on the extrapolation space, we refer to \cite[Section~2.2]{AroraGlueckPaunonenSchwenninger2024} and \cite[Section~4]{AroraGlueckSchwenningerPreprint}.

\begin{theorem}
    \label{thm:weiss-sun-dual-observation-sufficient-positive}
    Suppose that $X$ and $Y$ are ordered Banach spaces such that $X$ has a generating and normal cone. Let $(T(t))_{t\ge 0}$ be a positive $C_0$-semigroup on $X$ and $C\in \calL(X_1,Y)$ be such that $C'(Y')\subseteq (X_1)^{\sun}= \left(X^{\sun}\right)_{-1}$.
    
    If $C$ is positive, $(X^{\sun})_+$ is a face of $X'_+$, and the norm on $Y$ is additive on the positive cone, then
    \[
        C \in \bbC_p(X, Y, (T(t))_{t\ge 0}) \Rightarrow C'\in \bbB_q(Y', X^{\sun}, (T^{\sun}(t))_{t\ge 0})
    \]
    for Hölder conjugates $p,q\in [1,\infty]$.
\end{theorem}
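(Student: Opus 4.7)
The case $p>1$ (equivalently $q<\infty$) requires no positivity and is already contained in Theorem~\ref{thm:weiss-sun-dual-observation-sufficient-zero-class}, so the novelty lies entirely in the case $p=1$, $q=\infty$. The plan is to replace the zero-class hypothesis that was indispensable there with an order-theoretic domination argument enabled by the face property.

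First, I would use the additivity of $\norm{\argument}_Y$ on $Y_+$ together with positive homogeneity to produce a positive functional $e^{*}\in Y'_+$ with $\duality{y}{e^{*}}=\norm{y}_Y$ for every $y\in Y_+$; testing against positive elements then delivers the pointwise bound $\varphi\le \norm{\varphi}_{Y'}\,e^{*}$ in $Y'$ for every $\varphi\in Y'_+$. Setting
\[
    v_\tau:=\int_0^\tau (T^{\sun})_{-1}(s)\,C'e^{*}\dx s,
\]
I would appeal to $C'e^{*}\in (X^{\sun})_{-1}$ together with strong continuity of $(T^{\sun}(t))_{t\ge 0}$ to conclude $v_\tau\in X^{\sun}$: the standard identity $A^{\sun}_{-1}v_\tau=(T^{\sun})_{-1}(\tau)C'e^{*}-C'e^{*}$ forces $v_\tau\in\dom(A^{\sun}_{-1})=X^{\sun}$.

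Next, for positive $y\in L^\infty([0,\tau],Y'_+)$, I would combine the pointwise inequality $y(t)\le \norm{y}_\infty\,e^{*}$ with positivity of $C'$ and of $(T^{\sun})_{-1}$ to obtain $0\le \Phi_\tau y\le \norm{y}_\infty\,v_\tau$ in $X'$. I would verify this by pairing with $x\in\dom(A)\cap X_+$ and extending to all of $X_+$ by density, which is legitimate since $X_+$ is generating and normal. The assumed face property of $(X^{\sun})_+$ in $X'_+$ then forces $\Phi_\tau y\in (X^{\sun})_+\subseteq X^{\sun}$, which is the desired admissibility restricted to positive inputs.

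The hard part will be extending the conclusion from positive $y$ to general $y\in L^\infty([0,\tau],Y')$. This calls for a measurable splitting $y=y_1-y_2$ with $y_1,y_2\in L^\infty([0,\tau],Y'_+)$; such a splitting is available provided $Y'_+$ is generating -- true, for example, whenever $Y_+$ itself is normal and generating, which covers all the examples listed prior to the theorem -- and is obtained via a measurable-selection argument of Kuratowski--Ryll-Nardzewski type. Once the decomposition is secured, $\Phi_\tau$ extends linearly to all of $L^\infty([0,\tau],Y')$ with image in $X^{\sun}$, and the asserted norm bound then follows from the closed graph theorem.
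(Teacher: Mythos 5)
Your argument is, in substance, the paper's own proof. The functional $e^{*}$ you build from additivity of the norm on $Y_{+}$ is exactly the order unit of $Y'$ that the paper obtains by citing Batty--Robinson; the observation that $\Phi_{\tau}$ applied to the constant function with value $e^{*}$ lands in $\dom\bigl((A^{\sun})_{-1}\bigr)=X^{\sun}$, the domination $0\le \Phi_{\tau}y\le \norm{y}_{\infty}v_{\tau}$ for positive $y$, and the appeal to the face property of $(X^{\sun})_{+}$ are all identical to the published argument. Deferring the case $p>1$ to Theorem~\ref{thm:weiss-sun-dual-observation-sufficient-zero-class} is legitimate; the paper simply runs the same order-theoretic argument uniformly in $q$.

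The one step where you deviate --- the reduction from general $y$ to positive $y$ --- is both heavier than necessary and, as written, not covered by the hypotheses: the theorem does not assume that $Y_{+}$ is normal or generating, so you may not use those properties to make $Y'_{+}$ generating, and no Kuratowski--Ryll-Nardzewski selection is needed. The repair is already contained in your own construction. For \emph{every} $\varphi\in Y'$, not only positive ones, and every $y\in Y_{+}$ one has $\duality{y}{\varphi}\le\norm{\varphi}\,\norm{y}=\norm{\varphi}\duality{y}{e^{*}}$, hence $-\norm{\varphi}\,e^{*}\le\varphi\le\norm{\varphi}\,e^{*}$; so $e^{*}$ is an order unit of $Y'$ and the decomposition
\[
    y(\argument)=\bigl(y(\argument)+\norm{y}_{\infty}e^{*}\bigr)-\norm{y}_{\infty}e^{*}
\]
splits any $y\in L^{\infty}([0,\tau],Y')$ explicitly and measurably into two positive summands (for $q<\infty$ replace $\norm{y}_{\infty}$ by the scalar $L^{q}$-function $t\mapsto\norm{y(t)}$). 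This is precisely how the paper concludes, via Ng's lemma that a cone with an order unit is generating; with this substitution your proof matches the paper's without any added assumptions.
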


An ordered Banach space is called a \emph{Banach lattice} if any two elements have a supremum and $\sup\{-x,x\} \le \sup\{-y,y\}$ implies $\norm{x}\le \norm{y}$.
If $X$ is a Banach lattice, then various sufficient conditions under which $(X^{\sun})_+$ is a face of $X'_+$ are given in \cite[Chapter~8]{vanNeerven1992}. In particular, this is the case when $X=\mathrm C(K)$.

\begin{proof}[Proof of Theorem~\ref{thm:weiss-sun-dual-observation-sufficient-positive}]
    For $\tau>0$, let $\Phi_{\tau}: L^q([0,\tau], Y')\to (X_1)^{\sun}$ be given by
    \[
        y \mapsto \int_0^{\tau} (T^{\sun})_{-1}(\tau-s)C'y(s)\dx s.
    \]
    As $C$ is positive, so it its dual $C'$. Together with positivity of the semigroup, this ensures positivity of $\Phi_{\tau}$.
    As explained in the proof of Theorem~\ref{thm:weiss-sun-dual-observation-sufficient-zero-class}, we know from \cite[Theorem~6.9(i)]{Weiss1989a} that $\Ima \Phi_{\tau} \subseteq X'$ and we're left to show that $\Ima\Phi_{\tau}\subseteq X^{\sun}$.

    Due to \cite[Proposition~1.4.2(3) and (2)]{BattyRobinson1984}, the assumption on $Y$ implies that $Y'$ has a unit, say $e$, i.e., $Y'=\bigcup_{\lambda>0} [-\lambda e,\lambda e]$.
    Denoting by $\mathbf e\in L^q([0,\tau], Y')$, the constant function taking value $e$, we have
     $\Phi_{\tau}\mathbf e  \in \dom( (A^{\sun})_{-1})= X^{\sun}$. As $(X^{\sun})_+$ is a face of $X'_+$ and $\Phi_{\tau}$ is positive, it follows that $\Phi_{\tau}y \in X^{\sun}$ for all $y \in L^q([0,\tau], Y')_+$. Finally, as $Y'$ has a unit, its cone -- and in turn the cone of $L^q([0,\tau],Y')$ -- is generating \cite[Lemma~2]{Ng1969}. It follows that $\Phi_{\tau}y \in X^{\sun}$ for all $y \in L^q([0,\tau], Y')$.
\end{proof}

\begin{remark}
    In Theorem~\ref{thm:weiss-sun-dual-observation-sufficient-positive}, one can argue as in \cite[Theorem~4.10]{AroraGlueckPaunonenSchwenninger2024} to weaken the assumption of positivity of $C$ to the condition: $C'$ maps the unit ball of $Y'$ into an order bounded subset of $(X_1)'$. Sufficient conditions for this property are available in \cite[Proposition~A.1]{AroraGlueckPaunonenSchwenninger2024}.
\end{remark}

\section{An Example}
    \label{sec:example}

Throughout this section, let $(R(t))_{t\ge 0}$ be the nilpotent right-shift semigroup on $X:=L^1\big([0,1]\big)$ with generator $A$. Consider the space of test functions
\[
    \widetilde \calD: = \{ \phi \in\mathrm C^{\infty}\big([0,1]\big): \phi(0)=0\}
\]
and for $g,f$ in the dual space $(\widetilde \calD)'$, write
\[
    g=\widetilde\partial f:\Leftrightarrow\duality{f}{\phi'}=-\duality{g}{\phi} \text{ for all }\phi\in \widetilde \calD.
\]
This allows for a convenient description of the extrapolation space corresponding to the dual:

\begin{proposition}
    The extrapolation space associated to the left shift semigroup on $X'$ is given by
    \begin{equation}
        \label{eq:extrapolation-left-translation}
        (X')_{-1}= \{ g \in (\widetilde \calD)': g= \widetilde\partial f \text{ for some } f\in L^\infty\big([0,1]\big)\}.
    \end{equation}
\end{proposition}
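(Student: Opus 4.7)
The plan is to use the identification $(X')_{-1}=(X_1)'=(\dom A)'$ noted earlier in the paper, together with the fact that the weak derivative $\partial\colon X_1 \to L^1[0,1]$ is a Banach space isomorphism, to reduce the description of $(X')_{-1}$ to the standard dual $(L^1[0,1])'=L^\infty[0,1]$.

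First, I would compute the generator explicitly: since $(R(t))_{t\ge 0}$ is the nilpotent right-shift on $L^1[0,1]$, one has $A=-\partial$ with
\[
    \dom A = \{f\in W^{1,1}[0,1] : f(0)=0\}.
\]
Combining $f(0)=0$ with the fundamental theorem of calculus shows that the graph norm on $\dom A$ is equivalent to $\|f'\|_{L^1}$, so $\partial\colon X_1 \to L^1[0,1]$ is a topological isomorphism, the inverse being $\psi \mapsto \int_0^{\cdot}\psi(t)\dx t$.

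Dualising this isomorphism, every $g \in (X_1)' = (X')_{-1}$ admits a unique representation
\[
    \langle g,\phi\rangle = \int_0^1 \tilde f(x)\,\phi'(x)\dx x \qquad (\phi \in X_1)
\]
for some $\tilde f \in L^\infty[0,1]$. I would then verify that $\widetilde\calD$ is dense in $X_1$ for the graph norm: given $f \in X_1$, approximate $f' \in L^1[0,1]$ by some $\psi \in C^\infty[0,1]$ and set $\phi(x):=\int_0^x \psi(t)\dx t \in \widetilde\calD$. Density implies that the restriction map $(X_1)' \hookrightarrow (\widetilde\calD)'$ is injective, so the representation above is already determined by its restriction to the smaller test class $\widetilde\calD$.

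Setting $f:=-\tilde f \in L^\infty[0,1]$, the relation reads $\langle f,\phi'\rangle = -\langle g,\phi\rangle$ for all $\phi \in \widetilde\calD$, which is exactly the condition $g=\widetilde\partial f$. Conversely, for $f\in L^\infty[0,1]$ the element $g=\widetilde\partial f \in (\widetilde\calD)'$ satisfies $|\langle g,\phi\rangle|=|\langle f,\phi'\rangle|\le \|f\|_\infty\,\|\phi'\|_{L^1}\le \|f\|_\infty\|\phi\|_{X_1}$ for $\phi\in \widetilde\calD$, and so extends by density to an element of $(X_1)' = (X')_{-1}$. I expect no serious obstacle: once the identification $(X')_{-1}=(X_1)'$ is in place, the argument is a clean diagram chase through the $L^1$--$L^\infty$ duality induced by $\partial$, with the only mildly non-trivial ingredient being the density of $\widetilde\calD$ in $X_1$.
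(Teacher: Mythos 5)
Your argument is correct, but it takes a genuinely different route from the paper's. The paper stays entirely on the dual side: it extends the canonical embedding $j\colon X'\to(\widetilde\calD)'$ to a map $j_{-1}$ on $(X')_{-1}$ by means of $(A')_{-1}^{-1}$, proves injectivity of $j_{-1}$ by a bespoke computation (showing that $h:=-(A')_{-1}^{-1}g$ must equal a constant $\alpha$ and then that $\alpha=0$), and reads off~\eqref{eq:extrapolation-left-translation} from a commuting square involving $\widetilde\partial$. You instead pass to the predual through the identification $(X')_{-1}=(X_1)'$ and observe that $\partial\colon X_1\to L^1[0,1]$ is an isomorphism (the boundary condition $f(0)=0$ giving the graph-norm equivalence), so that $(X_1)'$ is precisely the image of $L^\infty[0,1]$ under the adjoint of $\partial$; injectivity of the restriction to $\widetilde\calD$ then follows from the density of $\widetilde\calD$ in $X_1$, which you verify correctly. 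What each approach buys: the paper's proof is self-contained once $(X')_{-1}$ is taken as the completion of $X'$ with respect to $\norm{(A')^{-1}\argument}$, whereas yours outsources the only non-elementary step to the identification $(X')_{-1}=(X_1)'$ --- a standard but non-trivial fact for adjoint semigroups lacking strong continuity, which the paper itself invokes without proof in Section~\ref{sec:observation} and which you should cite --- and in exchange the whole description collapses to classical $L^1$--$L^\infty$ duality transported by $\partial$, with no ad hoc computation. One point worth making explicit: the proposition describes $(X')_{-1}$ as a subset of $(\widetilde\calD)'$ via the extension of the canonical embedding $j$, so you need the one-line check that your composite map (identification with $(X_1)'$ followed by restriction of functionals to $\widetilde\calD\subseteq X_1$) agrees with $j$ on $X'$ and hence, by density of $X'$ in $(X')_{-1}$, coincides with the paper's $j_{-1}$; this is routine and not a gap.
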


\begin{proof}
    We proceed as in \cite[Example~3.2.7]{Wintermayr2019}:~Let $j:X'\to (\widetilde \calD)'$ be the canonical embedding, i.e.,  $\duality{j(g)}{\phi}=\int_0^1 g(x)\phi(x)\dx x$ for all $\phi \in \widetilde \calD$. 
    For each $g\in X'$ and $\phi\in \widetilde \calD$, we have
    \[
        \duality{j(g)}{\phi} = \int_0^1((A')^{-1}g)'(x)\phi(x)\dx x
                             = -\int_0^1((A')^{-1}g)(x)\phi'(x)\dx x;
    \]
    here we have used that each $f\in \dom(A')$ satisfies $f(1)=0$ and $A'f=f'$. 
    Thus, $j$ can be extended to $j_{-1}: (X')_{-1}\to (\widetilde \calD)'$ as 
    \[
        \duality{j_{-1}(g)}{\phi} := -\duality{(A')_{-1}^{-1}g}{\phi'} \qquad \left(g \in (X')_{-1}, \phi \in \widetilde \calD\right).
    \]
    Next, let $g\in (X')_{-1}$ such that $j_{-1}(g)=0$ and set $h:=-(A')_{-1}^{-1}g$.  Then $\alpha:= \int_0^1 h(x)\dx x$ and $\phi(t):= \int_0^t (h(x)-\alpha)\dx x$ satisfy $\phi \in \widetilde \calD$ and $\int_0^1\alpha h(x)\dx x=\alpha^2$. Thus, 
    \[
        0  = \duality{j_{-1}(g)}{\phi}
           = \duality{h}{\phi'}
           = \int_0^1 h(x)(h(x)-\alpha)\dx x
           = \int_0^1 (h(x)-\alpha)^2\dx x,
    \]
    which shows that $h(x)=\alpha$ a.e. $x$. Next, choose $\phi \in \widetilde \calD$ with $\phi(1)\ne 0$. Then
    \[
        0  = \duality{j_{-1}(g)}{\phi}
           = \duality{h}{\phi'}
           = \alpha \int_0^1 \phi'(x)\dx x
           = \alpha \phi(1),
    \]
    from which it follows that $\alpha=0$. Injectivity of $(A')_{-1}^{-1}$ now implies that $g=0$. Summarising, $j_{-1}$ is actually an embedding and the following diagram
    \begin{center}
        \begin{tikzcd}
    	X'      \arrow[d, hook,"j" '] & & (X')_{-1} \arrow[ll, "(A')_{-1}^{-1}" '] \arrow[d, hook, "j_{-1}"] \\
            (\widetilde \calD)'  \arrow{rr}{\widetilde\partial}        & & (\widetilde \calD)'
        \end{tikzcd}
    \end{center}
    commutes. This proves that $(X')_{-1}$ is contained in the set on the right in~\eqref{eq:extrapolation-left-translation}. Conversely, let $g \in (\widetilde \calD)'$ such that  $g= \widetilde\partial f$  for some  $f\in X'$. Bijectivity of $(A')_{-1}$ implies that there exists $h\in (X')_{-1}$ such that $(A')_{-1}^{-1}h=f$. Commutativity of the above diagram implies that 
    $
        j_{-1}(h)=\widetilde\partial(j(f))
    $
    and so $h$ can be identified with $\widetilde \partial f$ which means $g=h\in (X')_{-1}$. This verifies~\eqref{eq:extrapolation-left-translation}.
    \end{proof}

Having the description of the dual extrapolation space at hand enables us to describe all (zero-class) $L^1$-admissible observation functionals associated to the right shift semigroup on $X$. In what follows, $\mathrm{BV}\big([0,1]\big)$ denotes the space of functions of bounded variation on $[0,1]$.

\begin{proposition}
    \label{prop:zero-class-l1-observation-right-translation}
    The complex-valued $L^1$-admissible observation operators are
    \begin{equation}
        \label{eq:l1-observation-right-translation}
        \bbC_1(X, \bbC, (R(t))_{t\ge 0})= \{C \in\calL(X_1,\bbC): C'(1)=\widetilde\partial c \text{ for some }c\in \mathrm{BV}([0,1])\}.
    \end{equation}
    Moreover, the admissibility of $C\in \bbC_1(X, \bbC, (R(t))_{t\ge 0})$ is zero-class if and only if $\widetilde\partial c$ has no atomic part.
\end{proposition}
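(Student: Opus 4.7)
The plan is to identify $C'(1) \in (X_1)'$ with a unique $c \in L^\infty[0,1]$ by combining the preceding proposition with the standard identification $(X_1)' = (X')_{-1}$, and then to characterise when this $c$ lies in $\mathrm{BV}[0,1]$ in terms of the $L^1$-admissibility of $C$. Explicitly, every $C \in \calL(X_1,\bbC)$ determines a unique $c \in L^\infty[0,1]$ with $C\phi = -\int_0^1 c(x)\phi'(x)\,dx$ for $\phi \in X_1$. Substituting $\phi = R(t)f$ (which lies in $X_1$ whenever $f \in X_1$, since $f(0) = 0$ ensures continuity of $R(t)f$ at $x = t$) and changing variables gives the working formula
\[
    CR(t)f \;=\; -\int_0^{1-t} c(y+t)\,f'(y)\,dy.
\]

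For the sufficiency direction I would assume $c \in \mathrm{BV}[0,1]$, pass to its right-continuous representative, and apply Lebesgue--Stieltjes integration by parts to rewrite the working formula as
\[
    CR(t)f \;=\; \int_{[t,1]} f(x-t)\,d\mu(x) \;-\; c(1)\,f(1-t),
\]
where $\mu := dc$ is a finite signed Borel measure on $[0,1]$. Taking absolute values, integrating in $t$, and applying Fubini yields $\int_0^\tau |CR(t)f|\,dt \le \bigl(\Var_0^1(c) + |c(1)|\bigr)\|f\|_{L^1}$, so $C$ extends by density to a bounded map $X \to L^1([0,\tau],\bbC)$.

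The main obstacle is the converse. The working formula shows that $S_\tau h(t) := -\int_0^{1-t} c(y+t)h(y)\,dy$ extends to a bounded operator $L^1[0,1] \to L^1([0,\tau])$: any $h \in L^1[0,1]$ arises as $f'$ with $f(y) := \int_0^y h \in X_1$, and $L^1$-admissibility yields $\|S_\tau h\|_{L^1} \le \norm{C}_{\bbC_1(X,\bbC,\tau)}\,\|f\|_{L^1}$, crucially a bound in $\|f\|_{L^1}$ rather than $\|h\|_{L^1}$. I would then test with the balanced bump $h_\delta := \chi_{[a,a+\delta]} - \chi_{[b,b+\delta]}$ (for $0 \le a < b$ with $b + \delta \le 1 - \tau$): the trapezoidal primitive satisfies $\|f_\delta\|_{L^1} = \delta(b-a)$. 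Dividing the admissibility bound by $\delta$ and passing to the limit $\delta \to 0^+$ via Fatou's lemma and Lebesgue's differentiation theorem produces
\[
    \int_0^\tau |c(b+t) - c(a+t)|\,dt \;\le\; \norm{C}_{\bbC_1(X,\bbC,\tau)}\,(b-a).
\]
Covering $[0,1-h]$ by $\lceil 1/\tau \rceil$ shifted copies of $[0,\tau]$ upgrades this to the linear $L^1$-translation-modulus bound $\|c(\cdot + h) - c\|_{L^1[0,1-h]} = O(h)$, which is the classical characterisation forcing $c \in \mathrm{BV}[0,1]$. Selecting the \emph{balanced} test function $h_\delta$ is the key insight: a single bump $\chi_{[a,a+\delta]}$ has a primitive with $\|f\|_{L^1}$ of order one, only yielding a pointwise $L^\infty$-type bound on $c$.

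For the zero-class equivalence, if $\widetilde\partial c$ has no atomic part then the right-continuous representative of $c$ is continuous on $(0,1)$ with $c(1^-) = 0$, so (after redefining $c(1) := 0$) the measure $\mu = dc$ is atomless on $[0,1]$ and the boundary term in the sufficiency formula vanishes. A standard compactness argument gives $\sup_u |\mu|([u,u+\tau] \cap [0,1]) \to 0$ as $\tau \downarrow 0$, which inserted into the sufficiency estimate yields zero-class admissibility. Conversely, an atom of $\widetilde\partial c$ of mass $|j| > 0$ at some $x_0 \in (0,1]$ is detected by the approximate identity $f_\epsilon := \epsilon^{-1}\chi_{[x_0-\epsilon,x_0]}$ of unit $L^1$-norm: substituting into the formula for $CR(t)f$ (now valid since $c \in \mathrm{BV}$ by the first part) gives $\int_0^\tau |CR(t)f_\epsilon|\,dt \to |j|$ as $\epsilon \downarrow 0$ for every fixed $\tau > 0$, so $\norm{C}_{\bbC_1(X,\bbC,\tau)} \ge |j|$ uniformly in $\tau > 0$, contradicting zero-class.
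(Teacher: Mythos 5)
Your proof is correct, and in the necessity direction it takes a genuinely different route from the paper's. Both arguments begin identically: identify $C'(1)=\widetilde\partial c$ with a (unique) $c\in L^\infty[0,1]$ via the description of $(X')_{-1}$, and observe that $CR(t)f$ is the convolution of $f$ with $\widetilde\partial c$. At that point the paper finishes the set equality~\eqref{eq:l1-observation-right-translation} in one stroke by citing the classical characterisation of translation-invariant bounded operators on $L^1$ as convolutions with finite measures \cite[Theorem~2.5.8]{Grafakos2014}, together with the equivalence between $\widetilde\partial c\in\calM[0,1]$ and $c\in\mathrm{BV}[0,1]$ from \cite[Proposition~3.6]{AmbrosioFuscoPallara2000}. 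You instead prove the BV property by hand: testing the admissibility estimate against the balanced bumps $h_\delta=\chi_{[a,a+\delta]}-\chi_{[b,b+\delta]}$, whose primitive has $L^1$-norm exactly $\delta(b-a)$, and extracting the linear translation modulus $\norm{c(\argument+h)-c}_{L^1}=O(h)$, which characterises BV. This is longer but self-contained, and it makes explicit where the $L^1$-structure of the state space enters; the paper's version is shorter at the price of a harmonic-analysis black box. The sufficiency direction and the zero-class equivalence are in substance the same in both proofs (Fubini/Young's inequality for the measure part; detection of atoms, which you do with approximate identities $f_\epsilon$ where the paper uses mutual singularity on windows $[\xi,\xi+\tau]$). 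Two cosmetic corrections: in the atom-detection step the clean statement is $\liminf_{\epsilon\downarrow0}\int_0^\tau\modulus{CR(t)f_\epsilon}\dx t\ge\modulus{j}$ rather than convergence to $\modulus{j}$, since the non-atomic part still contributes on $(\epsilon,\tau]$ --- this is all you need; and the primitive of a single bump $\chi_{[a,a+\delta]}$ has $L^1$-norm of order $\delta(1-a)$ rather than of order one, the actual point being that a single bump only controls $\int_0^\tau\modulus{c(a+t)}\dx t$ and not the differences needed for a variation bound.
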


\begin{proof}
    Let $C\in \calL(X_1,\bbC)$. Since $C'(\bbC)\subseteq (X_1)'=(X')_{-1}$, we obtain from~\eqref{eq:extrapolation-left-translation}, an element $c \in L^\infty\big([0,1]\big)$ such that $C'(1)=\widetilde\partial c$. Equivalently,
    \[
        Cf = \duality{C'(1)}{f} = \duality{\widetilde\partial c}{f}_{(X_1)' ,X_1}=\duality{\widetilde\partial c}{f}_{(\widetilde\calD)' ,\widetilde \calD} 
    \]
    for all $f \in \widetilde \calD$. In particular, for each $f \in \widetilde\calD$, we have
    \[
         C R(s)f =  \duality{\widetilde\partial c}{R(s)f}_{(\widetilde\calD)' ,\widetilde \calD}
                                        = \duality{\widetilde\partial c}{\bar f(s-\argument)}   _{(\widetilde\calD)' ,\widetilde \calD}
                                        = (\bar f \star\widetilde \partial c)(s);
    \]
    where $\bar f(x)=f(-x)$ and $f$ is extended by $0$ outside $[0,1]$.
    This means that 
    $C \in \bbC_1(X, \bbC, (R(t))_{t\ge 0})$ if and only if $\widetilde\partial c$ lies in $\calM\big([0,1]\big)$, the space of measures of bounded variation, see \cite[Theorem~2.5.8]{Grafakos2014}. Since $g=\widetilde\partial f$ implies that $g=\partial f$, so $\widetilde\partial c \in \calM\big([0,1]\big)$ is equivalent to $c \in \mathrm{BV}\big([0,1]\big)$ due to \cite[Proposition~3.6]{AmbrosioFuscoPallara2000}.
    The equality in~\eqref{eq:l1-observation-right-translation} is now immediate.
    
    Next, by the semigroup law, zero-class $L^1$-admissibility of $C$ is equivalent to $\lim_{\tau\downarrow 0}\int_{\xi}^{\xi+\tau} \modulus{C R(s)f}\dx s=0$ for each $\xi\in [0,1]$. 
    Therefore, the above computations show that $C$ is zero-class $L^1$-admissible if and only if there exists $c\in \mathrm{BV}\big([0,1]\big)$ such that $C'(1)=\widetilde\partial c$ and
    \begin{equation}
        \label{eq:zero-class-intermediate}
        \lim_{\tau\downarrow 0}\norm{f \star\widetilde \partial c}_{L^1([\xi,\xi+\tau], \bbC)}=0 \quad \text{for all }\xi\in [0,1]\text{ and } f\in \widetilde \calD.
    \end{equation}

    Now, let $c\in \mathrm{BV}\big([0,1]\big)$ and set $\mu:=\widetilde\partial c$. Using the Radon-Nikodym decomposition, we  write $\mu=\mu_a+\mu_j+\mu_c$ where $\mu_a$ is absolutely continuous with respect to the Lebesgue measure, $\mu_c$ is non-atomic part, and $\mu_j$ is the purely atomic part; see \cite[Section~3.2]{AmbrosioFuscoPallara2000}. If $\mu_j=0$, then a variation of Young's convolution inequality \cite[Page~54]{Folland2016} and mutual singularity of the measures, gives for each $f\in \widetilde\calD$:
    \begin{align*}
        \norm{f \star\widetilde \partial c}_{L^1([0,\tau], \bbC)} &\le \norm{\mu}_{\calM([0,\tau])} \norm{f}_{L^1([0,1])}\\
                                                                &= \big(\modulus{\mu_a}([0,\tau])+\modulus{\mu_c}([0,\tau])\big)\norm{f}_{L^1([0,1])},
    \end{align*}
    which converges to $0$ as $\tau\downarrow 0$. On the other hand, if $\mu_j\ne 0$, then there exists $\xi\in [0,1]$ such that $\modulus{\mu_j}(\{\xi\})\ne 0$. Therefore, $\modulus{\mu_j}([\xi,\xi+\tau])\not\to 0$ as $\tau\downarrow 0$. Once again, the mutual singularity of the measures yields
    \[
        \norm{\mu}_{\calM([\xi,\xi+\tau])}= \modulus{\mu_j}([\xi,\xi+\tau])+\modulus{\mu_a}([\xi,\xi+\tau])+\modulus{\mu_c}([\xi,\xi+\tau]).
    \]
    Noting as above that $\modulus{\mu_a}([\xi,\xi+\tau])+\modulus{\mu_c}([\xi,\xi+\tau]) \to 0$ as $\tau\downarrow 0$, it follows that $\lim_{\tau\downarrow 0}\norm{\mu}_{\calM([\xi,\xi+\tau])}\ne 0$. A suitable choice of $f\in \widetilde\calD$ thus implies that $\lim_{\tau\downarrow 0}\norm{f \star\widetilde \partial c}_{L^1([\xi,\xi+\tau], \bbC)}\ne 0$ and so the $L^1$-admissibility is not zero-class.
    An appeal to~\eqref{eq:zero-class-intermediate} now shows that the zero-class $L^1$-admissibility of $C$ is equivalent to the existence of $c\in \mathrm{BV}\big([0,1]\big)$ such that $C'(1)=\widetilde\partial c$ has no atomic part.
\end{proof}    

    Recall from \cite[Example~1.1(ii)]{vanNeerven1993} that the sun-dual semigroup associated to $(R(t))_{t\ge 0}$ is the nilpotent left translation semigroup $(L(t))_{t\ge 0}$ on
    \[
        X^{\sun} = \{ f \in\mathrm C\big([0,1]\big):f(1)=0\}.
    \]
    Moreover, we know from \cite[Example~5.1]{BatkaiJacobVoigtWintermayr2018} that
    \[
        (X^{\sun})_{-1}= \{ g \in \calD': g= \partial f \text{ for some } f\in X^{\sun}\};
    \]
    where $\calD$ is the usual space of test functions on $[0,1]$, i.e.,
    $
        \calD: = \mathrm C_{\mathrm c}^\infty\big((0,1)\big).
    $
    While the direct computation of admissible control operators associated to $(L(t))_{t\ge 0}$ on $X^{\sun}$ is tedious, our results in the prequel along with the analysis in the present section allow us to characterise all $\mathrm C$-admissible rank-one control operators:
    \begin{proposition}
        \label{prop:admissible-control-left-translation}
        The set $\bbB_{\mathrm C}(\bbC, X^{\sun}, (L(t))_{t\ge 0})$ can be described as
        \[
            \{ C':  C \in \calL(X_1,\bbC)  \text{ and }C'(1)=\partial b=\widetilde\partial c \text{ for some }b\in X^{\sun}, c\in \mathrm{BV}\big([0,1]\big)\}. 
        \]
    \end{proposition}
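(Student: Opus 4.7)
The plan is to translate each of the two defining conditions of the set $S$ on the right-hand side of the claimed equality into previously established characterisations. On the one hand, the description $(X^{\sun})_{-1} = \{g \in \calD' : g = \partial f \text{ for some } f \in X^{\sun}\}$ cited from \cite{BatkaiJacobVoigtWintermayr2018} shows that the existence of $b \in X^{\sun}$ with $C'(1) = \partial b$ is equivalent to $\Ima C' \subseteq (X^{\sun})_{-1}$. On the other hand, by Proposition~\ref{prop:zero-class-l1-observation-right-translation}, the existence of $c \in \mathrm{BV}[0,1]$ with $C'(1) = \widetilde{\partial} c$ is equivalent to $C \in \bbC_1(X, \bbC, (R(t))_{t \ge 0})$. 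So $S$ can be rewritten as the set of duals $C'$ of those $C \in \bbC_1(X, \bbC, (R(t))_{t \ge 0})$ for which $\Ima C' \subseteq (X^{\sun})_{-1}$.

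The crucial additional observation is that having both representations simultaneously forces the $L^1$-admissibility of $C$ to be zero-class. Indeed, testing the equation $\partial b = \widetilde{\partial} c$ against an arbitrary $\phi \in \calD \subseteq \widetilde{\calD}$ yields $\int_0^1 (b - c)\phi'\dx x = 0$, from which $b - c$ is almost everywhere constant. Since $b$ is continuous, $c$ then admits a continuous representative, so the measure $\widetilde{\partial} c$ has no atomic part. Proposition~\ref{prop:zero-class-l1-observation-right-translation} now yields that $C$ is zero-class $L^1$-admissible. Consequently, $S$ coincides with the set of duals of the zero-class $L^1$-admissible observation operators $C \in \calL(X_1, \bbC)$ satisfying $\Ima C' \subseteq (X^{\sun})_{-1}$.

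With $S$ in this form, both inclusions reduce to the duality machinery of Section~\ref{sec:observation}. For $S \subseteq \bbB_{\mathrm{C}}(\bbC, X^{\sun}, (L(t))_{t \ge 0})$, I would invoke the case $p = 1$ of Theorem~\ref{thm:weiss-sun-dual-observation-sufficient-zero-class}, which promotes zero-class $L^1$-admissibility of $C$ (together with $\Ima C' \subseteq (X^{\sun})_{-1}$) to $\mathrm{C}$-admissibility of $C'$ as a control operator for the sun-dual semigroup $(L(t))_{t \ge 0}$. For the converse inclusion, given $B \in \bbB_{\mathrm{C}}(\bbC, X^{\sun}, (L(t))_{t \ge 0})$, the embedding $(X^{\sun})_{-1} \subseteq (X_1)'$ allows me to define $C \in \calL(X_1, \bbC)$ by $Cx := \duality{x}{B(1)}$; then $C'(1) = B(1) \in (X^{\sun})_{-1}$ by construction, and the remark following Theorem~\ref{thm:weiss-sun-dual-observation-necessary} (case $p = 1$) gives $C \in \bbC_1(X, \bbC, (R(t))_{t \ge 0})$, so that $B = C' \in S$.

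The principal subtlety lies in the second paragraph: recognising that the two distributional derivatives $\partial b$ and $\widetilde{\partial} c$, which a priori live in duals of different test-function spaces, can coincide only up to absorbing a constant, and that this in turn forces the absence of an atomic part in $\widetilde{\partial} c$. It is precisely this automatic zero-class feature that makes Theorem~\ref{thm:weiss-sun-dual-observation-sufficient-zero-class} applicable with $p = 1$ without needing to impose zero-class admissibility as an additional hypothesis in the description of $S$.
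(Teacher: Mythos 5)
Your proof is correct and follows essentially the same skeleton as the paper's: for the forward inclusion you combine the description of $(X^{\sun})_{-1}$, the fact that $\partial b=\widetilde\partial c$ with $b\in X^{\sun}$ forces $\widetilde\partial c$ to have no atomic part, Proposition~\ref{prop:zero-class-l1-observation-right-translation}, and the $p=1$ case of Theorem~\ref{thm:weiss-sun-dual-observation-sufficient-zero-class}; for the converse you pass from $\mathrm C$-admissibility of the control back to $L^1$-admissibility of the associated observation functional and then read off $b$ and $c$. There are two minor deviations worth recording. First, you make the ``no atomic part'' step explicit via the du Bois--Reymond argument, concluding that $b-c$ is a.e.\ constant; the paper simply asserts the claim from the continuity of $b$. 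Be aware, however, that for a merely continuous $c$ the distribution $\widetilde\partial c$ may still carry an atom at the right endpoint of mass $-c(1)$, because test functions in $\widetilde\calD$ need not vanish at $1$; so strictly speaking one should also argue that the constant $b-c$ is zero (equivalently that $C'(1)$, lying in $(X^{\sun})_{-1}$, has no $\delta_1$-component) --- a point your argument and the paper's gloss over in exactly the same way, so this is not a gap relative to the paper. Second, for the converse the paper invokes sun-reflexivity of $X$ and applies Theorem~\ref{thm:sun-dual-continuous} to $B$ directly, identifying $B=B''$, whereas you construct $C$ from $B(1)$ via the embedding $(X^{\sun})_{-1}\subseteq (X_1)'$ and cite the remark after Theorem~\ref{thm:weiss-sun-dual-observation-necessary}; since that remark is itself proved by Theorem~\ref{thm:sun-dual-continuous}, the two routes are equivalent, but yours has the small advantage of not needing the explicit sun-reflexivity citation.
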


    \begin{proof}
        Firstly, let $C\in\calL(X_1,Y)$ be such that $C'(1)=\partial b=\widetilde\partial c$ for some $b\in X^{\sun}$ and $c\in \mathrm{BV}\big([0,1]\big)$. By the description of $(X^{\sun})_{-1}$, we get $C'(\bbC)\subseteq (X^{\sun})_{-1}$. 
        In fact, the continuity of $b$, in particular,  also means that $\widetilde\partial c =\partial b$ has no atomic part. Together, Proposition~\ref{prop:zero-class-l1-observation-right-translation} and Theorem~\ref{thm:weiss-sun-dual-observation-sufficient-zero-class} now imply that $C'\in\bbB_{\mathrm C}(\bbC, X^{\sun}, (L(t))_{t\ge 0})$.

        Conversely, let $B\in \bbB_{\mathrm C}(\bbC, X^{\sun}, (L(t))_{t\ge 0})$, then sun-reflexivity of $X$ -- which is known from \cite[Example~1.3(ii)]{vanNeerven1993} -- along with Theorem~\ref{thm:sun-dual-continuous} yields that $B'\in \bbC_1(X, \bbC, (R(t))_{t\ge 0})$. Thus there exists $c\in \mathrm{BV}\big([0,1]\big)$ such that $B''(1)=\widetilde\partial c$. Also, as $B''(1)=B(1) \in (X^{\sun})_{-1}$, there exists $b\in X^{\sun}$ such that $B''(1)=\partial b$. Thus, $B=B''$ has the desired form.
    \end{proof}

    \begin{remarks}
        (a) The proof of Proposition~\ref{prop:admissible-control-left-translation} even shows that the admissibility of each element of $\bbB_{\mathrm C}(\bbC, X^{\sun}, (L(t))_{t\ge 0})$ is zero-class; cf. Theorem~\ref{thm:weiss-sun-dual-observation-sufficient-zero-class}.
        
        (b) It can be inferred from \cite[Corollary~4.7]{AroraGlueckPaunonenSchwenninger2024} that every positive $B\in \calL(\bbC, (X^{\sun})_{-1})$ is zero-class $\mathrm C$-admissible.

        (c) While $g=\widetilde f$ implies $g=\partial f$, the converse is not true in general. Therefore, we do not know whether in the description of $\bbB_{\mathrm C}(\bbC, X^{\sun}, (L(t))_{t\ge 0})$ from Proposition \ref{prop:admissible-control-left-translation} we can simply write $C'(1)=\partial b$  for some $b\in X^{\sun}\cap \mathrm{BV}\big([0,1]\big)$.
    \end{remarks}

\section{Concluding remarks: Generalizations and further directions}
    \label{sec:conclusion}

\subsection{Beyond the sun dual space}

A closer look at the proofs in Sections~\ref{sec:control} and~\ref{sec:observation} suggest that the results can be generalized  for closed subspaces of the sun-dual space that leave the dual semigroup invariant as long as the norming property holds.

Let $(T(t))_{t\ge 0}$ be a $C_0$-semigroup on a Banach space $X$ with generator $A$. Let $X^{\dotbox}$ be a closed subspace of $X^\sun$, which is invariant under the dual semigroup $(T'(t))_{t\ge 0}$ and norming for $X$, that is, 
\begin{equation*}
    \norm{x}' :=\sup\left\{\modulus{\duality{x^{\dotbox}}{x}_{X' , X}}\colon x^{\dotbox}\in X^{\dotbox}\right\}, \quad x\in X,
\end{equation*}
defines an equivalent norm on $X$. Then, the restriction $\left(T^{\dotbox}(t)\right)_{t\ge 0}$ of the dual semigroup on $X^{\dotbox}$ generates a strongly continuous semigroup on $X^{\dotbox}$ whose generator is given by the part of $A'$ in $X^{\dotbox}$, which follows completely analogous as in the case of $X^{\dotbox}=X^{\sun}$, see e.g., \cite[Proposition~II.2.6]{EngelNagel2000}. With obvious adaptations of the proofs of Theorems~\ref{thm:sun-dual-continuous}, \ref{thm:weiss-sun-dual-control}, \ref{thm:weiss-sun-dual-observation-necessary}, and~\ref{thm:weiss-sun-dual-observation-sufficient-positive}, we obtain the following:

\begin{theorem}
    Let $(T(t))_{t\ge 0}$ be a $C_0$-semigroup on a Banach space $X$ and $X^{\dotbox}$ be a closed subspace of $X^{\sun}$ which is assumed to be norming for $X$ and $T(t)'$-invariant for all $t>0$. Further, let $p,q\in [1,\infty]$ be Hölder conjugates.
    \begin{enumerate}
        \item Let $U$ be a Banach space and let $B\in \calL(U, X_{-1})$ be a control operator.
        \begin{enumerate}
            \item The operator $B$ is (zero-class) $\mathrm C$-admissible if and only if the observation operator $B' \in \calL\left(\left(X^{\dotbox}\right)_1, U'\right)$ is (zero-class) $L^1$-admissible.

            \item If $B \in \bbB_p(U, X, (T(t))_{t\ge 0}))$, then $B' \in \bbC_q\left(X^{\dotbox},U',\left(T^{\dotbox}(t)\right)_{t\ge 0}\right)$ with 
            \[
                \norm{B'}_{\bbC_q\left(X^{\dotbox},U',\tau\right)} \le \norm{B}_{\bbB_p(U, X, \tau)}.
            \]
            The converse is true for $p<\infty$.
        \end{enumerate}

        \item Let $Y$ be a Banach space and $C\in \calL(X_1,Y)$ such that $C'(Y')\subseteq \left(X^{\dotbox}\right)_{-1}$.
        \begin{enumerate}
            \item If $C'\in \bbB_q\left(Y', X^{\dotbox}, \left(T^{\dotbox}(t)\right)_{t\ge 0}\right)$, then $C \in \bbC_p(X, Y, (T(t))_{t\ge 0})$ with
            $
                \norm{C}_{\bbC_p(X, Y, \tau)} \le \norm{C'}_{\bbB_q\left(Y', X^{\dotbox}, \tau\right)}.
            $

            \item Suppose that $X$ and $Y$ are even ordered Banach spaces such that $X$ has a generating and normal cone, $\left(X^{\dotbox}\right)_+$ is a face of $X'_+$, and the norm on $Y$ is additive on the positive cone.
            
            In addition, let the semigroup $(T(t))_{t\ge 0}$ and the operator $C$ be positive. Then
            $
                C \in \bbC_p(X, Y, (T(t))_{t\ge 0}) \Rightarrow C'\in \bbB_q\left(Y', X^{\dotbox}, (T^{\dotbox}(t))_{t\ge 0}\right).
            $
        \end{enumerate}
    \end{enumerate}
\end{theorem}

We don't know whether Theorem~\ref{thm:weiss-sun-dual-observation-sufficient-zero-class} can be adapted to this setting.

The above generalization of the sun-dual approach is not that artificial as it may seem on first glance. For sectorial operators, a concrete case of such a subspace of the sun-dual is the so-called \emph{moon-dual} space  defined as
\[
    X^{\moon} := \overline{\dom(A') }\cap \overline{\Ima (A')  };
\]
where the closures are taken in $X'$. Recalling from \cite[Theorem~1.3.1]{vanNeerven1992} that $X^{\sun} = \overline{\dom(A') }$, we immediately see that $X^{\moon}$ is a closed subspace of $X^{\sun}$ that is invariant under the dual semigroup $(T(t)')_{t\ge 0}$. It can also be shown that if $A$ is a sectorial operator, then $X^{\moon}$ is norming for $X$, see e.g., \cite[Appendix~15.B]{KunstmannWeis2004}. %In particular, the restriction generates a $C_0$-semigroup $\left(T^{\moon}(t)\right)_{t\ge 0}$ on $X^{\moon}$ whose generator is given by the part of $A'$ in $X^{\moon}$, cf. \cite[Section~II.2.3]{EngelNagel2000}. 
The moon dual space was first introduced by Fröhlich in his PhD thesis \cite[Section~6.2]{Fröhlich2003} as a tool in characterizing bounded $H^{\infty}$-calculus; ; see also \cite[Section~3]{FröhlichWeis2006}, and \cite{KaltonLoristWeis2023, Kriegler2009, KunstmannWeis2004}.

\subsection{Controllability and observability}

The contribution of the results derived in Sections~\ref{sec:control} and~\ref{sec:observation} can be interpreted as ``replacing reflexivity of the involved state space $X$ by considering the sun-dual (semigroup) instead of the dual (semigroup)". It seems natural to approach the related -- and formally dual -- concepts of controllable and observable linear systems similarly. Let $X, U$, and $X_F$ be Banach spaces and let $\mathrm Z$ be a placeholder for $\mathrm C$ (to denote continuous functions) or $L^p$ with $p\in [1,\infty]$. Recall that the tuple $(A,B,F,\tau)$ -- where $A$ generates a strongly continuous semigroup $(T(t))_{t\ge 0}$ on $X$,  $B\in\mathcal{L}(U,X_{-1})$, and $F\in \mathcal{L}(X_{F},X)$ -- is called \emph{$\mathrm{Z}$-controllable} if $B$ is $\mathrm{Z}$-admissible and $F(X_{F})\subseteq \Phi_{\tau}\big(\mathrm Z([0,\tau],U)\big)$. In the special case $X_{F}=X$ and $F=I_{X}$, the concept reduces to \emph{exact controllability}, whereas the case $X_{F}=X$ and $F=T(\tau)$ refer to the notion of \emph{null controllability}, see, e.g.,~\cite{Carja1988}. On the other hand, for $C\in\mathcal{L}(X_{1},Y)$ and $G\in\mathcal{L}(X,Y_{G})$ for some Banach spaces $Y$ and $Y_{G}$, we call $(C,A,G,\tau)$ \emph{$\mathrm{Z}$-observable} if $C$ is $\mathrm{Z}$-admissible and 
\[
    \norm{Gx}\leq \norm{\Psi_{\tau}x}_{\mathrm Z([0,\tau],Y)}\quad \forall~x\in X.
\]
For $G=I_{X}$ and $G=T(\tau)$, this notion corresponds to \emph{exact observability} and \emph{final state observability} (with respect to $\mathrm Z$), respectively. 

It is well-known that controllability and observability are dual, i.e., $(A,B,F,\tau)$ is controllable if and only if $(B',A{'},F{'},\tau)$ is observable provided that $X$ is a reflexive space or if $F=I_{X}$. In the latter case, this is a rather direct consequence of the open mapping theorem. Equivalences of this duality are often argued by abstract arguments originating from Douglas' work \cite{Douglas1966}; cf., \cite{Carja1988} and the references therein. 

It natural to pose the question if such duality results can also be derived in the context of sun-duality presented in the earlier sections of this work. Clearly, as for the admissibility involved in the definition of controllable and observable this follows as an application of the derived results. Yet, it seems far more involved to derive equivalences for the notions in the sun-dual context, already in the most prominent case of $F=I$ (or $G=I)$. Let us briefly indicate these obstructions: Suppose that  $\left(B{'}\restrict{(X^{\sun})_{1}},A^{\sun}, I_{X^{\sun}},\tau\right)$ is $L^{1}$-observable, which means that the mapping 
\begin{equation*}
    \Psi_{\tau}: X^{\sun}\to L^{1}([0,\tau],U'),x^{\sun}\mapsto B'T^{\sun}(\cdot)x^{\sun}
\end{equation*}
is well-defined as bounded operator and bounded from below. In order to show that $(A,B,I_{X},\tau)$ is controllable, by , we have to show that mapping
\[
    \Phi_{\tau}:\mathrm C([0,\tau], U)\to X, u\mapsto \int_{0}^{t}T_{-1}(t-s)Bu(s)\dx s
\]
which is well-defined due to Theorem \ref{thm:sun-dual-continuous}, is surjective. The standard argument -- which would work if $\Psi_{\tau}$ and $\Phi_{\tau}$ were dual operators -- is to show surjectivity relying on a Hahn-Banach argument. In the more general setting here, where $X^{\sun}$ is in general not equal to the dual $X'$, this seems to be out of reach. The converse direction, that $(A,B,F,\tau)$ $C$-controllable implies that $\left(B'\restrict{(X^{\sun})_{1}},A^{\sun}, I_{X^{\sun}},\tau\right)$ is $L^{1}$-observable is rather easy to show, just as in the classical case. 
The difficulties seem to become even more obstructive if $F$ is not the identity operator. While the presented results of this article seem to clarify the first step to handle these questions, a rigorous treatment of this duality for $X^{\sun}\neq X'$ and non-reflexive spaces $\mathrm Z([0,\tau];U)$ seems to be an interesting path for future research.

\subsection*{Acknowledgements} 
The first author was funded by the Deutsche Forschungsgemeinschaft (DFG, German Research Foundation) -- 523942381. The authors are indebted to the anonymous reviewers for their very useful comments and especially for bringing up the relevance of the presented results in the context of the generalizations discussed in Section 6.
    
\bibliographystyle{plainurl}
%\bibliographystyle{abbrv}
%\raggedright
\bibliography{literature}

\end{document}